\documentclass[a4paper,11pt]{article}
\usepackage{array}
\usepackage{amsmath,amscd,amssymb,amsthm} 
\usepackage{latexsym}
\usepackage{epsfig}
\usepackage{enumerate}
\usepackage{float}
\usepackage{systeme}
\usepackage{mathdots}
\usepackage{xcolor}
\usepackage{hyperref}
\hypersetup{colorlinks=true,linkcolor=blue, citecolor=blue}

\theoremstyle{plain}
\newtheorem{lemma}{Lemma}[section]

\newtheorem{proposition}[lemma]{Proposition}
\newtheorem{theorem}[lemma]{Theorem}
\newtheorem{corollary}[lemma]{Corollary}
\newtheorem{definition}[lemma]{Definition}
\theoremstyle{definition}

\theoremstyle{remark}
\newtheorem{remark}[lemma]{Remark}

\newcommand{\PP}{\mathbb P}
\newcommand{\ZZ}{\mathbb Z}
\newcommand{\cB}{\mathcal B}
\newcommand{\cE}{\mathcal E}
\newcommand{\cF}{\mathcal F}

\newcommand{\cO}{\mathcal O}
\newcommand{\cV}{\mathcal V}

\newcommand{\To}{\longrightarrow}
\newcommand{\Pic}{\mathop{\mathrm {Pic}}\nolimits}
\newcommand{\Proj}{\mathop{\null\mathrm {Proj}}\nolimits}

\newcommand{\Sym}{\mathop{\mathrm {Sym}}\nolimits}

\newcommand{\Reg}{\mathop{\mathrm {Reg}}\nolimits}

\newcommand{\bdp}{\mathbf p}

\newcommand{\bdh}{\mathbf h}
\newcommand{\bdf}{\mathbf f}
\newcommand\tw[2]{\!\left[\begin{smallmatrix}{#1}\\{#2}\end{smallmatrix}\right]}

\newcommand{\Hyp}{{\bdh}}
\newcommand{\fib}{{\bdf}}

\newenvironment{sistema}%
  {\left\lbrace\begin{array}{@{}l@{}}}%
  {\end{array}\right.}
\makeatletter
\renewcommand\@makefntext[1]{%
\setlength\parindent{1em}%
\noindent
\mbox{\@thefnmark}{#1}}
\makeatother
\begin{document}
\title{\textbf{Ulrich bundles on smooth toric threefolds with Picard number $2$}}
\author{Debojyoti Bhattacharya and Francesco Malaspina}
\date{\vspace{-5ex}}

\maketitle
\def\thefootnote{\null}
\footnote{Mathematics Subject Classification 2020. Primary: 14J60; Secondary: 13C14, 14F05, 14M25.
\\ Keywords: Ulrich bundle,  Beilinson spectral sequences, projective bundle, toric threefold, Castelnuovo-Mumford regularity}

\begin{abstract}\noindent
In this paper, we study Ulrich bundles on smooth toric threefolds with Picard number$~2$, namely $\PP(\cO_{\PP^2}(a_0)\oplus\cO_{\PP^2}(a_1))$.
We construct resolutions and monads for Ulrich bundles of arbitrary rank, and  provide
explicit examples together with a complete classification of those arising as pullbacks
from $\mathbb{P}^2$. As a consequence, we also show that these varieties are Ulrich wild.\\
\end{abstract}

\section*{Introduction}\label{sect:intro}

Arithmetically Cohen--Macaulay (aCM) vector bundles have attracted significant
attention in recent years, due to their role in the classification of vector bundles
on various projective varieties. Their importance lies in the fact that aCM bundles
provide a measure of the complexity of the underlying variety. Moreover, they
constitute a crucial ingredient in the study of arbitrary vector bundles, as
shown in~\cite{ES}.

A special subclass of aCM bundles is given by \emph{Ulrich bundles}, namely those
achieving the maximum possible  number of minimal generators. Originally studied in connection with the computation of Chow
forms, Ulrich bundles are conjectured to exist on any projective variety (see~\cite{ES}).
They are characterized by the linearity of the minimal graded free resolution of the
module of global sections over the polynomial ring. Several recent works focus on
Ulrich bundles on significant varieties (see \cite
{A, ACM, CCHMW, CM1, CM2}). See also \cite{UB} for a comprehensive survey. For instance, in~\cite{CMP0} the
authors construct families of arbitrary dimension on Segre varieties (except
$\mathbb{P}^1 \times \mathbb{P}^1$). In~\cite{FM} it is shown that every Ulrich bundle on a 
rational normal scroll of dimension two arises as an extension of two direct sums of line
bundles; moreover, in the case of quartic scrolls a complete classification of aCM
bundles is obtained. In ~\cite{CFA},  the moduli space of stable Ulrich bundles of rank $r$ and determinant $\mathcal O_X(r)$  on any smooth Fano threefold X of certain specific index is considered and it is shown that the moduli space is  smooth of dimension $r^2+1$. In ~\cite{FF}, investigation concerning the existence of Ulrich bundles and Ulrich complexity is carried out on some suitable threefold scrolls over Hirzebruch surfaces $\mathbb F_{e}$ for any $e \geq 0$.   In~\cite{AHMP}, Ulrich bundles of arbitrary rank on smooth projective
varieties of minimal degree and of any dimension are  characterized by
means of a special type of filtration. As a consequence, their moduli spaces are
zero-dimensional.

The case $\mathbb{P}^2 \times \mathbb{P}^1$ is particularly interesting: in fact, there exist only finitely
many aCM bundles that are not Ulrich (see \cite{FMS}). Veronese varieties have been
investigated in~\cite{ES}, where it is shown that every Ulrich bundle has natural
cohomology and arises from a graded resolution. These results have been
generalized to Segre--Veronese varieties in~\cite{M}.

The aim of this article is to obtain an analogous result for smooth projective
threefolds $X$ obtained as projectivisations of sums of line
bundles on $\PP^2$, i.e.  $X=\PP(\cO_{\PP^2}(a_0)\oplus\cO_{\PP^2}(a_1))$ with $0<a_0\leq a_1$. In
fact it was shown in~\cite{Kl} that these are exactly the smooth toric threefolds with Picard number two.\\


We now present the main results and the organization of the paper as follows:\\

In section $\S \ref{sect:scrolls}$,  we collect few definitions and facts related to the basic objects (e.g. aCM, Ulrich sheaf, objects in the derived category of coherent sheaves, Beilinson theorems) concerned which will be required in the subsequent sections.\\

In section $\S \ref{S2}$, in the first part i.e. in subsection \ref{S21}, we explicitly describe the base variety $X$ i.e. smooth toric threefolds with Picard number $2$ and document some crucial facts (e.g. S.E.S's, cohomology of certain sheaves) on them which will be useful later. In the second part i.e. in subsection \ref{sect:splitting}, we prepare the main technical ingredient to prepare the Beilinson table to construct  resolution for Ulrich bundles on $X$. To be more precise, for an Ulrich bundle $\cE$ on $X$, we establish cohomology vanishing results (in the spirit of~\cite{ES} for Veronese varieties) in a suitable range (see Proposition \ref{riv2}) and we then compute the cohomology of $\cE$
tensored with the pullbacks of the bundles $\Omega^{1}_{\mathbb{P}^2}$ with appropriate twist (see Proposition \ref{riv2}). As a consequence, we deduce that every Ulrich bundle $\cE$ on $X$ is regular in the sense of the Castelnuovo--Mumford regularity defined in~\cite{MS} (See Remark \ref{rem}$(i)$). We also  use the generalized Hoppe's criterion over polycyclic varieties as in \cite{Gauge} to obtain improvements of the already obtained vanishing results (See Remark \ref{rem}$(ii)$) which are useful in obtaining resolution, monadic description of Ulrich bundles on some special subclasses of the base variety under consideration.\\

In section $\S \ref{Res}$,  with the cohomological vanishing properties of Ulrich bundles over $X$ at hand,  we establish the main theorem of this paper i.e. construction of a resolution of Ulrich bundles on $X$. To state our main result, let $\Hyp$
and $\fib$ be the generating classes in $\Pic X$ (for detailed description see subsection \ref{S21}) and let 
$\cF\tw{a}{b}:=\cF(a\Hyp+b\fib)$ for  a sheaf $\cF$ on $X$. For an Ulrich bundle $\cE$ on $X$, let :
\begin{itemize}
    \item $a_i^{j,k}:=h^{i}(\cE\tw{j}{k})$.\

    \item $b_i^{j,k}:=h^{i}(\cE \otimes \Omega_{\pi}\tw{j}{k})$, where, $\Omega_{\pi}$ is the pullback of $\Omega^1_{\mathbb P^2}$ (see \eqref{eq:34}).\
\end{itemize}
Thanks to a Beilinson type spectral sequence (see Theorem \ref{use}) constructed with suitable exceptional collections (\eqref{col5} and the dual \eqref{col6}), we prove the following theorem, which gives a resolution (see Theorem \ref{main}):


\begin{equation*}
0\to \cO_X\tw{-1}{-1}^{\oplus a_0^{-1,c-2}}\to \cO_X\tw{-1}{0}^{\oplus b_0^{-1,c}} \bigoplus \cO_X\tw{0}{-2}^{\oplus a_0^{0,-1}}\to 
\end{equation*}
$$ \to \cO_X\tw{-1}{1}^{\oplus a_0^{-1,c-1}} \bigoplus \cO_X\tw{0}{-1}^{\oplus b_0^{0,1}} \to \cO_X^{\oplus a_0^{0,0}} \to \cE \to 0$$

We also obtain resolution of Ulrich bundles on $X$ with low values of $c$ (see Proposition \ref{lowc}) and monadic description for certain Ulrich bundles on $X$ (see Remark \ref{monad}).\\

In section $\S \ref{S4}$, we focus on providing examples of Ulrich bundles on $X$ and establishing the Ulrich wildness of $X$. Thanks to the study of Ulrich bundles on Veronese surfaces (as in \cite{Genc} and \cite{Costa}), we obtain a complete characterization of the Ulrich bundles of arbitrary rank on $X$ which are pullbacks from $\mathbb P^2$. To be more precise, for any positive integer $d$, if we denote $(\mathbb P^2, dH)$ to be the Veronese  surface of degree $d^2$, let $G_\pi:=\pi^{*}(G)$ for some bundle $G$ on $\mathbb P^2$ and $c=a_0+a_1$, then we prove that (see Proposition \ref{pullback}):\\
$G_\pi\tw{a}{b}$ is Ulrich on $X$ for some $G$ on $\mathbb P^2$ and $(a,b) \in \mathbb Z^2$ if and only if one of the following conditions is satisfied:

$(i)$  $a=0$, $a_0=a_1$, $b=c-a_0$ and $G$ is Ulrich on $(\mathbb P^2, a_0H)$.\



$(ii)$  $a=1$, $b=-c$ and  $G$ is Ulrich on $(\mathbb P^2, cH)$.\

$(iii)$ $a=2$, $a_0=a_1$, $b=-2a_0$ and $G$ is Ulrich on $(\mathbb P^2, a_0H)$.\

As a consequence of this understanding of Ulrich pullbacks on $X$, we get a complete classification of Ulrich line bundles on $X$ (see Corollary \ref{line}), Ulrichicity of twisted $\Omega_\pi$ on $X$ (see Corollary \ref{cotangent}) and the Ulrich wildness of $X$ (see Corollary \ref{wild}). Finally, we discuss  the existence of even rank Ulrich bundles which are not pullbacks (see Remark \ref{nonpullback}) and end the paper with two open questions.\


\section{Preliminaries}\label{sect:scrolls}

In this section, we recall the definitions and theorems related to some of the main objects of this paper. We work with projective variety over an algebraically closed field of characteristic $0$. We start with the definition of aCM and Ulrich sheaf on a projective variety. If $\cE$ is a coherent sheaf on a projective variety $X$, we simply write $H^i(\cE):=H^i(X, \cE)$ and $h^i(\cE)=\text{dim}(H^i(\cE))$.\

\begin{definition}
A coherent sheaf $\cE$ on a projective variety $X$ with a fixed ample line bundle $\cO_X(1)$ is called {\it arithmetically Cohen-Macaulay} (for short, aCM) if it is locally Cohen-Macaulay and $H^i(\cE(t))=0$ for all $t\in \ZZ$ and $i=1, \ldots, \dim (X)-1$.
\end{definition}



\begin{definition}
For an {\it initialized} coherent sheaf $\cE$ on $X$, i.e. $h^0(\cE(-1))=0$ but $h^0(\cE)\ne 0$, we say that $\cE$ is an {\it Ulrich sheaf} if it is aCM and $h^0(\cE)=\deg (X)\mathrm{rank}(\cE)$.\
\end{definition}

Ulrich bundles on a smooth projective variety can be characterized in many different ways:

\begin{proposition}\label{Ulrich}
Let $(X, \mathcal O_X(1))$  be a $n$-dimensional smooth projective variety and $\cE$ be an initialized vector bundle on $X$. Then the following conditions are equivalent:\

$(i)$ $\cE$ is Ulrich.\

$(ii)$ $H^i(\cE(-t))=0$ for $i \geq 0$ and $1 \leq t \leq n$.\

$(iii)$ $H^i(\cE(-i))=0$ for $i > 0$ and $H^i(\cE(-i-1))=0$ for $i <n$.\

$(iv)$ For some (respectively all) finite linear projections $\pi: X \to \mathbb P^n$, the vector bundle $\pi_{*}(\cE)$ is the trivial vector bundle $\mathcal O_{\mathbb P^n}^{\oplus t}$ for some $t$.\
\end{proposition}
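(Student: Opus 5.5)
The plan is to route everything through a finite linear projection, which turns each cohomological condition on $\cE$ into a statement about a bundle on $\PP^n$. I will prove $(i)\Leftrightarrow(iv)$, $(iv)\Rightarrow(ii)$, $(ii)\Rightarrow(iii)$, and finally $(iii)\Rightarrow(iv)$.

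\emph{Setup.} Choose a general linear subspace of $\PP^N$ of codimension $n+1$ disjoint from $X\subset\PP^N$, where $\cO_X(1)$ is taken very ample. Projecting from it gives a finite surjective morphism $\pi:X\to\PP^n$ with $\pi^*\cO_{\PP^n}(1)=\cO_X(1)$ and of degree $\deg X$. Since $\pi$ is finite, we have $H^i(X,\cE(t))=H^i(\PP^n,\pi_*\cE(t))$ for all $i,t$ by the projection formula. Since $X$ is smooth (hence Cohen--Macaulay) and $\cE$ is locally free, $\pi_*\cE$ is a maximal Cohen--Macaulay sheaf on the regular scheme $\PP^n$. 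Hence it is locally free of rank $\deg(X)\,\mathrm{rank}(\cE)$.

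\emph{$(i)\Leftrightarrow(iv)$.} Set $F=\pi_*\cE$. Being aCM means $H^i_*(F)=0$ for $0<i<n$, which by Horrocks' criterion means $F$ splits as $F=\bigoplus\cO(b_j)$. The initialized condition says $H^0(F(-1))=0$, which forces every $b_j\le 0$. The condition $h^0(F)=\mathrm{rank}(F)$ then forces every $b_j=0$. Conversely, if $F$ is trivial, all of these conditions visibly hold. The argument works for any such projection, which accounts for the ``some/all'' clause in $(iv)$.

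\emph{$(iv)\Rightarrow(ii)$.} This is immediate, because $H^i(\cO_{\PP^n}(-t))=0$ for all $i$ and $1\le t\le n$. \emph{$(ii)\Rightarrow(iii)$} is trivial, since each required vanishing in $(iii)$ lies in the range covered by $(ii)$.

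\emph{$(iii)\Rightarrow(iv)$.} This is the main step. The vanishings transfer to $F$: $H^i(F(-i))=0$ for $i>0$ and $H^i(F(-i-1))=0$ for $i<n$. I will apply Beilinson's spectral sequence to $F(-1)$, with
\[
E_1^{p,q}=H^q(F(-1+p))\otimes\Omega^{-p}(-p),\qquad -n\le p\le 0 .
\]
The hypotheses kill every term except the single position $q=0$, $p=0$, where $E_1^{0,0}=H^0(F(-1))\otimes\cO$.
\begin{itemize}
\item For $q>0$ and $p\ne -q$, I must check the remaining vanishing of $H^q(F(-1+p))$. This should follow from regularity arguments.
\item Concretely, the condition $H^i(F(-i))=0$ for $i>0$ says that $F$ is $0$-regular, and Castelnuovo--Mumford then gives $H^i(F(t))=0$ for $t\ge -i$.
\item Dually, by Serre duality the vanishings $H^i(F(-i-1))=0$ for $i<n$ say that $F^\vee(-1)$ is $0$-regular, which controls the range $t\le -i-1$.
\end{itemize}
Combining the two ranges, $F(-1)$ has cohomology in no degree except possibly through $H^0$. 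More cleanly, I will apply Beilinson to $F$ itself: then $E_1^{p,q}=H^q(F(p))\otimes\Omega^{-p}(-p)$, and all terms vanish except $H^0(F)\otimes\cO$. The spectral sequence therefore degenerates to give $F\cong H^0(F)\otimes\cO_{\PP^n}$, i.e. $F$ is trivial.

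The main obstacle I expect is the bookkeeping in this last step: checking that the two regularity ranges together cover every $(p,q)$ with $-n\le p\le 0$ except $(0,0)$. The case to watch is $q=0$ with $p<0$. It is handled because $H^0(F(p))=0$ for $p<0$, which comes from the dual regularity at $i=0$, namely $H^0(F(-1))=0$.
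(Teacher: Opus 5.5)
Your argument is correct and is essentially the standard proof behind the result the paper only cites (Eisenbud--Schreyer, Prop.~2.1, and the survey's Thm.~3.2.9): push $\cE$ forward along a finite linear projection, use Horrocks' criterion for (i)$\Leftrightarrow$(iv), and use Castelnuovo--Mumford regularity plus Beilinson's spectral sequence on $\PP^n$ for (iii)$\Rightarrow$(iv).

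One slip needs fixing. By Serre duality, the vanishings $H^i(F(-i-1))=0$ for $0\le i<n$ say exactly that $F^\vee$ is $0$-regular. They do not say that $F^\vee(-1)$ is $0$-regular, which already fails for $F=\cO_{\PP^n}$ because $H^n(\cO_{\PP^n}(-n-1))\neq 0$. It is the $0$-regularity of $F^\vee$ that gives the range $H^i(F(t))=0$ for $t\le -i-1$, and that range is what you actually use. You should also delete the aborted first attempt with $F(-1)$.
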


\begin{proof}
See \cite{ES}, Proposition $2.1$ and \cite{UB}, Theorem $3.2.9$.\
\end{proof}

Throughout the paper, we will stick to the equivalent definition as in Proposition \ref{Ulrich}$(ii)$ for Ulrich bundles.\\

Next, we recall the definitions concerning the objects in the derived category of coherent sheaves over a smooth projective variety e.g. the notion of a full strong exceptional collection, left and right mutation functor, left and right dual collections, which are useful for preparing the Beilinson table in section \ref{Res}.\\

Given a smooth projective variety $X$, let $D^b(X)$ be the the bounded derived category of coherent sheaves over $X$. An object $E \in D^b(X)$ is called {\it exceptional} if $Ext^\bullet(E,E) = \mathbb C$.
A set of exceptional objects $\langle E_0, \ldots, E_n\rangle$ is said to be an {\it exceptional collection} if $Ext^\bullet(E_i,E_j) = 0$ for $i > j$. An exceptional collection is called {\it full} when $Ext^\bullet(E_i,A) = 0$ for all $i$ implies $A = 0$, or equivalently when $Ext^\bullet(A, E_i) = 0$ does the same. Moreover, An exceptional collection is further called strong exceptional collection
if $Ext^k(E_i, E_j ) = 0$ for all $i, j$ and $k > 0$.\

\begin{definition}\label{def:mutation}
Let $E$ be an exceptional object in $D^b(X)$.
Then there are functors $\mathbb L_{E}$ and $\mathbb R_{E}$ fitting in distinguished triangles
$$
\mathbb L_{E}(T) 		\to	 Ext^\bullet(E,T) \otimes E 	\to	 T 		 \to	 \mathbb L_{E}(T)[1]
$$
$$
\mathbb R_{E}(T)[-1]	 \to 	 T 		 \to	 Ext^\bullet(T,E)^* \otimes E	 \to	 \mathbb R_{E}(T)	
$$
The functors $\mathbb L_{E}$ and $\mathbb R_{E}$ are called respectively the \emph{left} and \emph{right mutation functor}.
\end{definition}


The collections defined by
\begin{align*}
E_i^{\vee} &= \mathbb L_{E_0} \mathbb L_{E_1} \dots \mathbb L_{E_{n-i-1}} E_{n-i};\\
^\vee E_i &= \mathbb R_{E_n} \mathbb R_{E_{n-1}} \dots \mathbb R_{E_{n-i+1}} E_{n-i},
\end{align*}
are again full and exceptional. They are said to be  the \emph{right} and the \emph{left dual} collections respectively. The dual collections are characterized by the following property; see \cite[Section 2.6]{GO}.
\begin{equation}\label{eq:dual characterization}
Ext^k(^\vee E_i, E_j) = Ext^k(E_i, E_j^\vee) = \left\{
\begin{array}{cc}
\mathbb C & \textrm{\quad if $i+j = n$ and $i = k $} \\
0 & \textrm{\quad otherwise}
\end{array}
\right.
\end{equation}

\vspace{5mm}

Equipped with the above notions in $D^b(X)$, we end this section by noting down the Beilinson spectral theorems, a crucial ingredient for obtaining resolution for Ulrich bundles in section \ref{Res}.\

\begin{theorem}[Beilinson spectral sequence]\label{thm:Beilinson}
Let $X$ be a smooth projective variety along with a full exceptional collection $\langle E_0, \ldots, E_n\rangle$ of objects in $D^b(X)$. Then for any object $A$ in $D^b(X)$, there is a spectral sequence
with the $E_1$-term given by
\[
E_1^{p,q} =\oplus_{r+s=q} Ext^{n+r}(E_{n-p}, A) \otimes \mathcal H^s(E_p^\vee )
\]
which is functorial in $A$ and converges to $\mathcal H^{p+q}(A)$.
\end{theorem}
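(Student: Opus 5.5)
The plan is to run the standard argument: resolve the diagonal $\Delta\subset X\times X$ using the collection and its dual, then pull and push $A$ through the resulting complex. Write $p_1,p_2\colon X\times X\to X$ for the projections. Since $\langle E_0,\dots,E_n\rangle$ is full and exceptional, the structure sheaf of the diagonal decomposes in $D^b(X\times X)$. Concretely, $\cO_\Delta$ admits a filtration (a Postnikov system) whose graded pieces are
$$E_{n-p}^{*}\boxtimes E_p^\vee[p], \qquad p=0,\dots,n,$$
up to the indexing and shift conventions fixed by \eqref{eq:dual characterization}. Here $E^*$ denotes the derived dual $R\mathcal{H}om(E,\cO_X)$.

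To prove this decomposition I would follow \cite[Section 2.6]{GO} (or Kapranov's argument). The kernel $\bigoplus_p E_{n-p}^*\boxtimes E_p^\vee$ is built as a convolution, using the left dual collection $^\vee E$ and the canonical elements of $\bigoplus Ext^\bullet(^\vee E_i,E_j)$. One then checks that the convolution is $\cO_\Delta$. It suffices to verify that the associated Fourier--Mukai functor $\Phi$ satisfies $\Phi(E_j)\cong E_j$ compatibly with morphisms. Using \eqref{eq:dual characterization}, only one term survives for each $E_j$. Fullness then forces $\Phi\cong \mathrm{id}$, hence the kernel is $\cO_\Delta$.

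Next apply the functor $T\mapsto Rp_{2*}(p_1^*A\otimes T)$, which sends $\cO_\Delta$ to $A$. On a graded piece it gives
$$Rp_{2*}\bigl(p_1^*(A\otimes E_{n-p}^*)\otimes p_2^*E_p^\vee\bigr)\cong R\Gamma(A\otimes E_{n-p}^*)\otimes E_p^\vee,$$
by the projection formula and flat base change. Moreover $H^{k}(A\otimes E^*_{n-p})=Ext^k(E_{n-p},A)$. A filtered object yields a spectral sequence, namely the one attached to the Postnikov tower, with respect to the cohomology sheaves $\mathcal H^\bullet$. Its $E_1$-term is $\mathcal H^q$ of the $p$-th graded piece, and it converges to $\mathcal H^{p+q}(A)$.

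It remains to compute $\mathcal H^q$ of $R\Gamma(A\otimes E_{n-p}^*)\otimes E_p^\vee$ shifted by the appropriate amount. A vector-space complex tensored with an object splits, so
$$\mathcal H^q=\bigoplus_{r+s=q} Ext^{n+r}(E_{n-p},A)\otimes \mathcal H^s(E_p^\vee),$$
once the shift is normalized so that the index $r$ is offset by $n$. Functoriality in $A$ is clear, because every step is a functor of $A$.

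The main obstacle is the bookkeeping of indices and shifts. The labels of the dual collection, the shift $[p]$ versus $[-p]$, and the offset $n+r$ must match the stated formula exactly. I would fix these by testing the construction on $A=E_j$. There the sequence must degenerate to a single $\mathbb C$ term, and \eqref{eq:dual characterization} pins down the normalization. I would also check the answer against the classical Beilinson sequence on $\PP^n$.
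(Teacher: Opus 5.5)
Your downstream steps are fine. An exact functor sends a finite Postnikov tower to a Postnikov tower of $A$. The cohomological functor $\mathcal H^0$ turns that tower into a convergent spectral sequence with $E_1^{p,q}=\mathcal H^{p+q}(\mathrm{gr}^p)$. A complex of vector spaces splits, which gives the $\bigoplus_{r+s=q}$ form. The gap is in the decomposition of $\cO_\Delta$, on which everything else rests, and it has three parts.

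First, a convolution of a complex with $n+1\ge 3$ terms in $D^b(X\times X)$ need not exist or be unique. So ``built as a convolution using canonical elements'' is not yet a construction.

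Second, the inference ``$\Phi(E_j)\cong E_j$ compatibly with morphisms, so by fullness $\Phi\cong\mathrm{id}$'' is not valid. Isomorphisms on generators, even compatible with all $\mathrm{Ext}^\bullet(E_i,E_j)$, need not assemble into a natural isomorphism on $D^b(X)$, because cones are not functorial. For a non-strong collection, the higher structure on $\bigoplus\mathrm{Ext}^\bullet(E_i,E_j)$ is not even visible at this level. What works is an actual morphism of kernels $K\to\cO_\Delta$ whose cone $C$ satisfies $\Phi_C(E_j)=0$ for all $j$. Then $\Phi_C$ kills the thick subcategory generated by the $E_j$, which is $D^b(X)$. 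Hence $C|_{\{x\}\times X}=\Phi_C(k(x))=0$ for every point $x$, and $C=0$.

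Third, your collapse check is attached to the wrong objects. With pieces $E_{n-p}^*\boxtimes E_p^\vee$, $\Phi(E_j)$ has factors $\mathrm{RHom}(E_{n-p},E_j)\otimes E_p^\vee$, and these can be nonzero for all $p\ge n-j$. For example, on $\PP^1$ take $(E_0,E_1)=(\cO,\cO(1))$, so $E_0^\vee=\cO(-1)$ and $E_1^\vee=\cO$. The object $A=\cO(1)$ then has two factors, $\mathbb C^2\otimes\cO$ and $\cO(-1)[1]$, which is the Euler sequence. By \eqref{eq:dual characterization}, the single surviving term occurs for $A=E_j^\vee$, not $A=E_j$. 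The left dual ${}^\vee E$ and the elements of $\mathrm{Ext}^\bullet({}^\vee E_i,E_j)$ that you invoke belong to the other decomposition of the diagonal, with pieces $({}^\vee E_i)^*\boxtimes E_{n-i}$ (suitably shifted). Testing on $E_p^\vee$ also fixes your shift: the $p$-th piece is $E_{n-p}^*\boxtimes E_p^\vee[n-p]$, not $[p]$.

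You can avoid $X\times X$ entirely. The paper gives no proof of its own and cites \cite{RU}, \cite{GO}, \cite{BO}; the route below is, as far as I recall, the one those references take. By the preliminaries, $(E_0^\vee,\dots,E_n^\vee)$ is full and exceptional. So every $A$ has a unique and functorial filtration whose $p$-th factor lies in $\langle E_p^\vee\rangle$, hence is $V_p\otimes E_p^\vee$ for some graded vector space $V_p$. The object $\mathrm{RHom}(E_{n-p},E_j^\vee)$ is $0$ for $j\neq p$ and $\mathbb C[p-n]$ for $j=p$. Applying $\mathrm{RHom}(E_{n-p},-)$ to the filtration therefore gives $V_p=\mathrm{RHom}(E_{n-p},A)[n-p]$. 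Your last step then produces exactly $\bigoplus_{r+s=q}\mathrm{Ext}^{n+r}(E_{n-p},A)\otimes\mathcal H^s(E_p^\vee)$. Existence, uniqueness and functoriality in $A$ all come for free from the semiorthogonal decomposition, with no convolutions to build. The diagonal route can be repaired, for instance by decomposing $\cO_\Delta$ along the induced semiorthogonal decomposition of $D^b(X\times X)$. It is strictly more work for the same spectral sequence.
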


Note that the statement and the proof of the above theorem  can be found  in  \cite[Corollary 3.3.2]{RU}, in \cite[Section 2.7.3]{GO} and in \cite[Theorem 2.1.14]{BO}.\


 Now assume that the full exceptional collection  $\langle E_0, \ldots, E_n\rangle$ contains only pure objects of type $E_i=\mathcal E_i^*[-k_i]$ with $\mathcal E_i$ a vector bundle for each $i$, and also the right dual collection $\langle E_0^\vee, \ldots, E_n^\vee\rangle$ consists of coherent sheaves. Then the Beilinson spectral sequence is much simpler as
\[
E_1^{p,q}=Ext^{n+q}(E_{n-p}, A) \otimes E_p^\vee=H^{n+q+k_{n-p}}(\mathcal E_{n-p}\otimes A)\otimes E_p^\vee.
\]

Observe that the grading in this spectral sequence applied for the projective space is slightly different from the grading of the usual Beilison spectral sequence, due to the existence of shifts by $n$ in the index $p,q$. Indeed, the $E_1$-terms of the usual spectral sequence are $H^q(A(p))\otimes \Omega^{-p}(-p)$, which are $0$ for positive $p$. To restore the order one needs to slightly change the gradings of the spectral sequence of the Theorem \ref{thm:Beilinson}. If we replace, in the expression
\[
E_1^{u,v} = \mathrm{Ext}^{v}(E_{-u},A) \otimes E_{n+u}^\vee=
H^{v+k_{-u}}(\mathcal E_{-u}\otimes A) \otimes \mathcal F_{-u}
\]
$u=-n+p$ and $v=n+q$, so that the fourth quadrant is mapped to the second quadrant, we obtain the following version (see \cite{AHMP}) of the Beilinson spectral sequence as follows:

\begin{theorem}\label{use}
Let $X$ be a smooth projective variety with a full exceptional collection
$\langle E_0, \ldots, E_n\rangle$
where $E_i=\mathcal E_i^*[-k_i]$ with each $\mathcal E_i$ a vector bundle and $(k_0, \ldots, k_n)\in \ZZ^{\oplus n+1}$ such that there exists a sequence $\langle F_n=\mathcal F_n, \ldots, F_0=\mathcal F_0\rangle$ of vector bundles satisfying
\begin{equation}\label{order}
\mathrm{Ext}^k(E_i,F_j)=H^{k+k_i}( \mathcal E_i\otimes \mathcal F_j) =  \left\{
\begin{array}{cc}
\mathbb C & \textrm{\quad if $i=j=k$} \\
0 & \textrm{\quad otherwise}
\end{array}
\right.
\end{equation}
i.e. the collection $\langle F_n, \ldots, F_0\rangle$ labelled in the reverse order is the right dual collection of $\langle E_0, \ldots, E_n\rangle$.
Then for any coherent sheaf $A$ on $X$ there is a spectral sequence in the square $-n\leq p\leq 0$, $0\leq q\leq n$  with the $E_1$-term
\[
E_1^{p,q} = \mathrm{Ext}^{q}(E_{-p},A) \otimes F_{-p}=
H^{q+k_{-p}}(\mathcal E_{-p}\otimes A) \otimes \mathcal F_{-p}
\]
which is functorial in $A$ and converges to
\begin{equation}
E_{\infty}^{p,q}= \left\{
\begin{array}{cc}
A & \textrm{\quad if $p+q=0$} \\
0 & \textrm{\quad otherwise.}
\end{array}
\right.
\end{equation}
\end{theorem}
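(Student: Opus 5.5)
The plan is to deduce Theorem \ref{use} from Theorem \ref{thm:Beilinson} by a change of indices, after checking that the reversed collection $\langle F_n,\dots,F_0\rangle$ really is the right dual collection. That identification is the only substantive step.

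\emph{Identifying the dual.} By the characterization \eqref{eq:dual characterization}, the right dual $\langle E_0^\vee,\dots,E_n^\vee\rangle$ is the unique collection with
\[
Ext^k(E_i,E_j^\vee)=\mathbb C \ \text{ when } i+j=n \text{ and } i=k,
\]
and zero otherwise. Set $E_j^\vee:=F_{n-j}$. Then hypothesis \eqref{order} reads $Ext^k(E_i,E_j^\vee)=\mathbb C$ exactly when $i=n-j=k$, which is the required condition.

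For uniqueness, I argue as follows. Since $\langle E_i\rangle$ is full, an object $B$ with $Ext^\bullet(E_i,B)=0$ for all but one index is determined by those Ext groups. One then argues inductively via mutations that any object satisfying these orthogonality conditions is isomorphic to $\mathbb L_{E_0}\cdots\mathbb L_{E_{n-j-1}}E_{n-j}$. This is the standard fact \cite[Section 2.6]{GO}, which I will quote rather than reprove. This uniqueness step is the main obstacle, if one insists on a self-contained argument. In particular, the dual objects are the sheaves $\mathcal F_{n-j}$, concentrated in degree $0$.

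\emph{Reindexing.} Substitute into Theorem \ref{thm:Beilinson}. Because $E_p^\vee=\mathcal F_{n-p}$ is a sheaf, $\mathcal H^s(E_p^\vee)$ vanishes unless $s=0$. Hence
\[
E_1^{p,q}=Ext^{n+q}(E_{n-p},A)\otimes\mathcal F_{n-p}.
\]
Moreover,
\[
Ext^{m}(E_i,A)=Ext^m(\mathcal E_i^*[-k_i],A)=H^{m+k_i}(\mathcal E_i\otimes A).
\]
Now put $u=p-n$ and $v=n+q$. The term becomes
\[
Ext^{v}(E_{-u},A)\otimes\mathcal F_{-u}=H^{v+k_{-u}}(\mathcal E_{-u}\otimes A)\otimes\mathcal F_{-u},
\]
with $-n\le u\le 0$. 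Translating indices commutes with the differentials $d_r$ of bidegree $(r,1-r)$, since the shift is by $(-n,n)$, so it is a legitimate relabelling of the same spectral sequence. Total degree is preserved: $u+v=p+q$.

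\emph{Range and abutment.} $Ext^v(E_{-u},A)$ can be nonzero only for $0\le v\le n$ when $A$ is a sheaf. Indeed, cohomology of a vector bundle twist vanishes outside $[0,\dim X]$, and the shifts $k_i$ are absorbed because the terms are nonzero only when the $\mathcal E_{-u}$-cohomology is. Alternatively, finiteness of the square follows from \eqref{order} together with fullness. The spectral sequence converges to $\mathcal H^{u+v}(A)$, which is $A$ in degree $0$ and zero otherwise, because $A$ is a coherent sheaf. This gives the stated $E_\infty$ and the functoriality is inherited, proving the theorem.
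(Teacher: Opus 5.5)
Your derivation follows the paper's own route. You specialize Theorem \ref{thm:Beilinson} using that $E_p^\vee=\mathcal F_{n-p}$ is a sheaf, rewrite $\mathrm{Ext}^m(\mathcal E_i^*[-k_i],A)=H^{m+k_i}(\mathcal E_i\otimes A)$, and regrade by $u=p-n$, $v=n+q$. The identification of the dual and the observation that a translation of bidegrees commutes with the $d_r$ are fine. The uniqueness you call the main obstacle is standard and harmless to cite.

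The genuine gap is in your ``Range and abutment'' paragraph: the assertion that $E_1^{u,v}=0$ for $v\notin[0,n]$, which is part of the statement. For a sheaf $A$ you only get $H^{v+k_i}(\mathcal E_i\otimes A)=0$ for $v\notin[-k_i,\dim X-k_i]$. This interval lies in $[0,n]$ if and only if $\dim X-n\le k_i\le 0$. Saying the shifts are ``absorbed'' does not prove this, and \eqref{order} by itself only gives $-i\le k_i\le \dim X-i$ (from $H^{i+k_i}(\mathcal E_i\otimes\mathcal F_i)\neq 0$). The shifts really matter: in the paper's application $n=5$, $\dim X=3$ and $k_i\in\{0,-2\}$, so the three left columns live in rows $2\le v\le 5$, not $0\le v\le 3$.

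To close the gap, use that the $F_j$ are sheaves.

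First, $E_i\cong\mathbb R_{F_0}\cdots\mathbb R_{F_{i-1}}F_i$, since this object has $\mathrm{Ext}^\bullet(-,F_j)=0$ for $j\neq i$ and $\mathrm{Ext}^\bullet(-,F_i)=\mathbb C[-i]$. If $T\in D^{\le 0}$ and $F$ is a sheaf, then $\mathrm{Ext}^m(T,F)=0$ for $m<0$. So $\mathbb R_F(T)$, the cone of $T\to\bigoplus_{m\ge 0}\mathrm{Ext}^m(T,F)^*\otimes F[m]$, is again in $D^{\le 0}$. Hence $E_i\in D^{\le 0}$, i.e.\ $k_i\le 0$.

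Second, by Serre duality $W_i:=E_i\otimes\omega_X[\dim X-i]$ satisfies $\mathrm{Ext}^\bullet(F_j,W_i)=\delta_{ij}\mathbb C$. Therefore $W_i\cong(\mathbb L_{F_n}\cdots\mathbb L_{F_{i+1}}F_i)[n-i]$. Dually to the first step, left mutation through a sheaf preserves $D^{\ge 0}$, so $W_i\in D^{\ge i-n}$. Since $W_i$ is the sheaf $\mathcal E_i^*\otimes\omega_X$ placed in degree $i+k_i-\dim X$, this gives $k_i\ge\dim X-n$.

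With $\dim X-n\le k_i\le 0$, your range claim follows. The paper itself does not argue this point either; it simply asserts that the regrading carries the fourth quadrant to the second.
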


\begin{remark}\label{rembeil}
It is possible to state a stronger version of the Beilinson's theorem (see \cite{ottanc}, \cite{Be} for $\mathbb{P}^N$ and \cite{AO3} for the projectivized of a direct sum of line bundles over $\mathbb{P}^N$). Let us consider $X = \PP\cV$ with $\cV=\oplus_{i=0}^t
\cO_{\PP^m}(a_i)$ and let $A$ be a coherent sheaf on $X$. Let $(E_0,\dots,E_n)$ be a full exceptional collection and $(F_n,\dots,F_0)$ its right dual collection. Using the notation of the  Theorem \ref{use}, if $(F_n,\dots,F_0)$ is strong, then there exists a complex of vector bundles $L^\bullet$ such that
\begin{enumerate}
\item $H^k(L^\bullet)=
\begin{cases}
A \ & \text{if $k=0$},\\
0 \ & \text{otherwise}.
\end{cases}$
\item $L^k=\underset{k=p+q}{\bigoplus}H^{q+k_{-p}}(A\otimes E_{-p})\otimes F_{-p}$ with $0\le q \le n$ and $-n\le p \le 0$.
\end{enumerate}
\end{remark}

\section{\hspace{-4mm}Cohomology \hspace{-1mm} of Ulrich bundles on \texorpdfstring{$\PP(\cO_{\PP^2}(a_0)\oplus\cO_{\PP^2}(a_1))$}{O(P2)(a0) + O(P2)(a1)}}\label{S2}


The purpose of this section is twofold and therefore, for convenience, we divide this section into two subsections. In the first section, we describe the base variety i.e. smooth toric threefolds having Picard number $2$ and record some useful facts/results on them  and in the second subsection, we study the cohomological vanishing properties of Ulrich bundles of arbitrary rank on these threefolds.\\

\subsection{Smooth toric threefolds with Picard number 2}\label{S21}

We fix a decomposible vector bundle $\cV=
\cO_{\PP^2}(a_0) \oplus \cO_{\PP^2}(a_1)$ of rank $2$ on $\PP^2$: we assume throughout that
$a_0\le a_1$. The associated projective space bundle
$X:=\PP\cV$ is by definition $\Proj(\Sym \cV)$, adopting the
notational conventions of \cite[Section~II.7]{Hartshorne}. The
associated line bundle $\cO_X(1)$ is relatively ample over $\PP^2$,
and is ample on $X$ if $a_0>0$. We put $c:= a_{0}+a_{1}$ and we let
$\pi\colon \PP (\cV) \to \PP^2$ be the projection. We denote by $\Hyp$
and $\fib$ the classes in $\Pic X$ of $\cO_{\PP(\cV)}(1)$
and the pullback $\pi^*\cO_{\PP^2}(1)$, respectively.  We note that the Chow ring of $X$ is given by: 
$$A(X)\cong\mathbb Z[\Hyp, \fib]/(\fib^3,\Hyp^2-c.\Hyp .\fib+a_0.a_1\fib^2),
$$ 
Recall that  $\Hyp^3=c^2-a_0.a_1$, $\Hyp^2.\fib=c.\Hyp. \fib^2$ and $\Hyp. \fib^2$ is the class of a point.\\





We fix the following notation and convention from now on:\

\begin{itemize}

\item For conciseness, if $\cF$ is a sheaf on $X$ we will often write
$\cF\tw{a}{b}:=\cF(a\Hyp+b\fib)$.\


\item With this notation, we have the canonical sheaf $\omega_X \cong \cO_X\tw{-2}{c-3}$.\

\item For our purpose, since we are considering the smooth case, from now on, we consider the above varieties $X$ with $a_0 >0$.\

\item When we say $\cE$ is an Ulrich bundle on $X$, we mean $\cE$ is an Ulrich bundle with respect to $\cO_X\tw{1}{0}$ (which is also a very ample line bundle if $a_0>0$).\

\end{itemize}

The following  easy lemma is useful for computation.
\begin{lemma}\label{lem:vanishing}
Let $X$ be as above. Then the following holds:
\begin{enumerate}[(i)]
\item $H^i(X, \cO_X\tw{a}{b}) \cong H^i(\PP^2, \Sym^a\cV \otimes
  \cO_{\PP^2}(b))$ if $a\ge 0$;
\item $H^i(X, \cO_X\tw{a}{b})) =0$ if $a=-1$;
\item $H^i(X, \cO_X\tw{a}{b}) \cong H^{3-i}(\PP^2, \Sym^{-a-2}\cV
  \otimes \cO_{\PP^2}(c-b-3))$ if $a<-1$.
\end{enumerate}
\end{lemma}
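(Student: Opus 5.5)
The plan is to push everything down to $\PP^2$ along $\pi$ and use the projection formula, since $\cO_X\tw{a}{b}=\cO_X(a)\otimes\pi^*\cO_{\PP^2}(b)$. The basic input is the standard computation of direct images for a $\PP^1$-bundle $\pi\colon \PP\cV\to\PP^2$ with $\cV$ of rank $2$, following the conventions of \cite[Section~II.7]{Hartshorne} and \cite[Ex.~III.8.4]{Hartshorne}:
\begin{itemize}
\item[(a)] $\pi_*\cO_X(a)\cong\Sym^a\cV$ for $a\ge 0$, and $\pi_*\cO_X(a)=0$ for $a<0$;
\item[(b)] $R^1\pi_*\cO_X(a)=0$ for $a\ge -1$;
\item[(c)] $R^1\pi_*\cO_X(a)\cong(\pi_*\cO_X(-a-2))^\vee\otimes\det\cV^\vee$ for $a\le -2$;
\item[(d)] $R^j\pi_*=0$ for $j\ge 2$, because the fibres are curves.
\end{itemize}

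For (i), take $a\ge0$. By (a), (b) and the projection formula, $\pi_*\cO_X\tw{a}{b}\cong\Sym^a\cV\otimes\cO_{\PP^2}(b)$ and all higher direct images vanish. The Leray spectral sequence then degenerates and gives the stated isomorphism. For (ii), take $a=-1$. Both $\pi_*$ and $R^1\pi_*$ of $\cO_X\tw{-1}{b}$ vanish, so $R\pi_*\cO_X\tw{-1}{b}=0$, and Leray gives $H^i=0$ for all $i$.

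For (iii), the cleanest route is Serre duality on $X$ rather than the relative duality in (c). We have $\omega_X\cong\cO_X\tw{-2}{c-3}$, as recorded above; this follows from the relative Euler sequence $\omega_{X/\PP^2}\cong\cO_X(-2)\otimes\pi^*\det\cV$ together with $\det\cV=\cO(c)$ and $\omega_{\PP^2}=\cO(-3)$. Hence
\[
H^i(\cO_X\tw{a}{b})\cong H^{3-i}(\cO_X\tw{-a-2}{c-3-b})^\vee .
\]
For $a<-1$ we have $-a-2\ge0$, so part (i) applies and the right-hand side is $H^{3-i}(\PP^2,\Sym^{-a-2}\cV\otimes\cO_{\PP^2}(c-b-3))^\vee$. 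Since all the spaces are finite dimensional, this gives the statement (read as an isomorphism up to duality, or equality of dimensions).

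The only step needing care is the sign convention in $\omega_X$, i.e. whether $\det\cV$ enters with $+c$ or $-c$. That depends on the Grothendieck convention $\pi_*\cO_X(1)=\cV$ versus its dual. I would check it with the relative Euler sequence $0\to\Omega_{X/\PP^2}\to\pi^*\cV(-1)\to\cO_X\to0$, which is valid for $\Proj(\Sym\cV)$. It gives $\omega_{X/\PP^2}=\pi^*\det\cV\otimes\cO_X(-2)$ and is consistent with the stated $\omega_X\cong\cO_X\tw{-2}{c-3}$.
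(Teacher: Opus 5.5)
Your proof is correct and follows essentially the paper's route: the paper's proof is only a citation of Hartshorne, Exercise III.8.4, and you feed those direct-image computations into the projection formula and the Leray spectral sequence for (i) and (ii). The one variation is in (iii), where you use Serre duality on $X$ with $\omega_X\cong\cO_X\tw{-2}{c-3}$ instead of the relative duality formula for $R^1\pi_*\cO_X(a)$ followed by Serre duality on $\PP^2$. Both routes give the same isomorphism up to dualizing finite-dimensional spaces, which is all the statement asserts.
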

\begin{proof}
See \cite[Exercise III.8.4]{Hartshorne}.
\end{proof}

To obtain a complete understanding of Ulrich pullbacks on $X$, we need the following generalized version of Lemma \ref{lem:vanishing} for vector bundles.\

\begin{lemma}\label{lem:vanishingbundle}
Let $X$ be as above and $G_\pi:=\pi^{*}(G)$ for some $G$ on $\mathbb P^2$. Then the following holds:\
\begin{enumerate}[(i)]
\item $H^i(X, G_{\pi}\tw{a}{b}) \cong H^i(\PP^2, \Sym^a\cV \otimes
  G(b))$ if $a\ge 0$;
\item $H^i(X, G_{\pi}\tw{a}{b})) =0$ if $a=-1$;
\item $H^i(X, G_{\pi}\tw{a}{b}) \cong H^{3-i}(\PP^2, \Sym^{-a-2}\cV
  \otimes G^{*}(c-b-3))$ if $a<-1$.
\end{enumerate}
\end{lemma}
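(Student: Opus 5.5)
The plan is to push forward along $\pi\colon X=\PP\cV\to\PP^2$ and use the projection formula together with the Leray spectral sequence, exactly as in the line bundle case (Lemma \ref{lem:vanishing}, i.e.\ \cite[Exercise III.8.4]{Hartshorne}). First note that $G_\pi\tw{a}{b}=\cO_X(a\Hyp)\otimes\pi^*(G(b))$. By the projection formula, for every $j$,
$$R^j\pi_*\bigl(G_\pi\tw{a}{b}\bigr)\cong R^j\pi_*\cO_X(a\Hyp)\otimes G(b),$$
since $G(b)$ is locally free (we take $G$ to be a vector bundle, as in the intended applications). Everything then reduces to the standard computation of $R^j\pi_*\cO_{\PP\cV}(a)$ for a $\PP^1$-bundle, which is the input to the exercise quoted in Lemma \ref{lem:vanishing}.

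\emph{Case $a\ge 0$.} Here $\pi_*\cO_X(a\Hyp)=\Sym^a\cV$ and $R^1\pi_*\cO_X(a\Hyp)=0$. The Leray spectral sequence therefore degenerates, giving $H^i(X,G_\pi\tw{a}{b})\cong H^i(\PP^2,\Sym^a\cV\otimes G(b))$, which is (i).

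\emph{Case $a=-1$.} The restriction of $\cO_X(-\Hyp)$ to every fibre is $\cO_{\PP^1}(-1)$, which has no cohomology. Hence all $R^j\pi_*$ vanish, and Leray gives (ii).

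\emph{Case $a\le -2$.} Here $\pi_*=0$ and, by relative duality,
$$R^1\pi_*\cO_X(a\Hyp)\cong\bigl(\pi_*\cO_X((-a-2)\Hyp)\bigr)^*\otimes\det\cV^*\cong(\Sym^{-a-2}\cV)^*\otimes\cO_{\PP^2}(-c).$$
Leray then gives
$$H^i(X,G_\pi\tw{a}{b})\cong H^{i-1}\bigl(\PP^2,(\Sym^{-a-2}\cV)^*\otimes G(b-c)\bigr).$$
Serre duality on $\PP^2$, with $\omega_{\PP^2}=\cO(-3)$, converts this into
$$H^{3-i}\bigl(\PP^2,\Sym^{-a-2}\cV\otimes G^*(c-b-3)\bigr),$$
which is (iii).

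Alternatively, for (iii) one can apply Serre duality directly on $X$ with $\omega_X\cong\cO_X\tw{-2}{c-3}$. This gives $H^i(X,G_\pi\tw{a}{b})\cong H^{3-i}(X,G^*_\pi\tw{-a-2}{c-3-b})^*$, and then case (i) applied to $G^*$ finishes, since $-a-2\ge 0$. This route avoids relative duality entirely, and I would present it this way. The only point needing care is the correct twist in $\omega_X$, and that is already recorded in the paper, so no real obstacle is expected.
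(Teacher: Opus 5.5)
Your argument is correct and is essentially the paper's. The paper only cites Hartshorne's Exercise III.8.4, and that exercise is exactly what you carry out: the computation of $R^j\pi_*\cO_X(a\Hyp)$ (including the relative duality formula for $R^1\pi_*$), combined with the projection formula and Leray. Your preferred shortcut for (iii) is also valid: Serre duality on $X$ with $\omega_X\cong\cO_X\tw{-2}{c-3}$, followed by (i) applied to $G^*$. You are also right to take $G$ locally free, which is needed for $G^*$ and the duality step.
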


\begin{proof}
The assertion follows from \cite[Exercise III.8.4]{Hartshorne}.\\
\end{proof}

Next, we note down the following short exact sequences (S.E.S's) on our threefold $X$ which will be crucial for obtaining the cohomology vanishing properties in the next subsection:\\

We have the dual of relative Euler exact sequence on $X$ given by: \

\begin{equation}\label{eq:33}
0\To \cO_X\tw{-1}{c} \To \cB:=
\bigoplus_{i=0}^1\cO_X\tw{0}{a_i} \To \cO_X\tw{1}{0}\To 0,
\end{equation}

Let $\Omega_{\pi}:=\pi^{*}\Omega^{1}_{\mathbb P^2}$. Then we have the pullback (under $\pi$) of the Euler exact sequence on $\mathbb P^2$ given by:

\begin{equation}\label{eq:34}
0\To \Omega_{\pi}\To
\cO_X\tw{0}{-1}^{\oplus {3}} \To \cO_X \To 0,\
\end{equation}
 and the pullback (under $\pi$) of the dual of the Euler exact sequence on $\mathbb P^2$ given by:

\begin{equation}\label{eq:35}
0\To \cO_X\tw{0}{-3} \To
\cO_X\tw{0}{-2}^{\oplus {3}} \To \Omega_{\pi} \To 0.\\
\end{equation}\\


We end this subsection by recalling the notion of regularity given in \cite{MS} adapted to our case i.e. for the scrolls $X$ with $m=2$ and $n=1$. We will later show  as a consequence of our study of cohomological  properties of Ulrich bundles on $X$ that Ulrich bundles on $X$ are regular in the sense of this definition.  For $p,\,q\in\ZZ$, we set $\bdp=p\Hyp+q\fib$ and we write
$\cF(\bdp)=\cF\tw{p}{q}$.\

\begin{definition}\label{def:pqreg3}
A coherent sheaf $\cF$ on $X$ is said to be \emph{$(p,q)$-regular} if
\begin{enumerate}[(a)]
\item $h^{1}(\cF(\bdp)\tw{-1}{c-1})=h^{2}(\cF(\bdp)\tw{-1}{c-2})=h^{3}(\cF(\bdp)\tw{-1}{c-3})= 0$, and
\item $h^{1}(\cF(\bdp)\tw{0}{-1})=h^{2}(\cF(\bdp)\tw{0}{-2})=0$.
\end{enumerate}
We will say \emph{regular} to mean $(0,0)$-regular.  We define the
\emph{regularity} of $\cF$, denoted $\Reg(\cF)$, to be the least
integer $p$ such that $\cF$ is $(p,0)$-regular. We set
$\Reg(\cF)=-\infty$ if there is no such integer.\ 
\end{definition}



\subsection{Cohomological vansihing properties}\label{sect:splitting}

 In this subsection, for an Ulrich bundle $\cE$ of arbitrary rank on $X$, we describe the complete list of cohomology vanishings of twisted $\cE$ and $\cE \otimes \Omega_{\pi}$, which is crucial for preparing the Beilinson table for obtaining resolutions of Ulrich bundles on $X$ in the next section.\\

\begin{proposition}\label{riv2}(Cohomological vanishing properties of Ulrich bundles)\
    
Let $\cE$ be an Ulrich bundle on $X$. Let $k \in \{0,1,2,3\}$. Then the following holds:\
\medbreak
\begin{enumerate}
\item[(a)]
$
\begin{cases}
H^{3}(\cE\tw{-k}{t})=0 \ \text{for $t \geq (k-3)a_0$},\\
H^{3}(\cE \otimes \Omega_{\pi} \tw{-k}{t})=0 \ \text{for $t \geq 2+(k-3)a_0$}.
\end{cases}$
\medbreak
\item[(b)] For each $k \in \{0,1,2\}$,\\
$\begin{cases}
H^{2}(\cE\tw{-k}{t})=0 \ \text{for $t \geq (k-2)a_0$},\\
H^{2}(\cE \otimes \Omega_{\pi} \tw{-k}{t})=0 \ \text{for $t \geq 2+(k-2)a_0$},\\
H^{2}(\cE\tw{-3}{t})=0 \ \text{for $t \geq c$ and $t \leq 0$}.\\
H^{2}(\cE \otimes \Omega_{\pi} \tw{-3}{t})=0  \ \text{for $t \geq 2+c$ and $t \leq 1$}.
\end{cases}$
\medbreak
\item[(c)] For each $k \in \{1,2,3\}$,\\
$
\begin{cases}
H^{1}(\cE\tw{-k}{t})=0 \ \text{for $t \leq (k-2)c$ and $k \neq 3$},\\
H^{1}(\cE\tw{-3}{t})=0 \ \text{for $t \leq a_0$},\\
H^{1}(\cE \otimes \Omega_{\pi} \tw{-k}{t})=0 \ \text{for $t \leq (k-2)a_{0}+1$ and $k \neq 1$}.\
\end{cases}
$
\medbreak
For each $k \in \{0, 1\}$,\\
$
\begin{cases}
H^{1}(\cE\tw{-k}{t})=0 \ \text{for $t \geq (k-1)a_0$},\\
H^{1}(\cE \otimes \Omega_{\pi} \tw{-k}{t})=0  \ \text{for $t \geq 2+(k-1)a_0$},\\
\end{cases}
$
\medbreak
\item[(d)] For each $k \in \{1,2,3,4\}$,\\
$
\begin{cases}
H^{0}(\cE\tw{-k}{t})=0 \ \text{for $t \leq (k-1)a_0$},\\
H^{0}(\cE \otimes \Omega_{\pi} \tw{-k}{t})=0  \ \text{for $t \leq 1+(k-1)a_{0}$ },\\
H^{0}(\cE \otimes \Omega_{\pi} \tw{0}{t})=0  \ \text{for $t \leq 1-c$}.
\end{cases}
$
\end{enumerate}


\end{proposition}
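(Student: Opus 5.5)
We propose to derive all the vanishings from the Ulrich condition in the form of Proposition \ref{Ulrich}$(ii)$, namely $H^i(\cE\tw{-k}{0})=0$ for all $i$ and $k=1,2,3$, combined with Serre duality ($\omega_X\cong\cO_X\tw{-2}{c-3}$, and $\cE^*\tw{2}{3-c}$ is again Ulrich-type up to twist, so that statements about $H^3$ and $H^0$, and about $H^2$ and $H^1$, mirror each other). The engine will be the sequences \eqref{eq:33}, \eqref{eq:34}, \eqref{eq:35}, used to move the twist $t$ in the $\fib$-direction while paying in the $\Hyp$-direction, followed by induction on $t$.

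First we treat $H^3$ and $H^0$ of $\cE\tw{-k}{t}$. Tensoring \eqref{eq:33} by $\cE\tw{-k-1}{t}$ gives
\[
0\to\cE\tw{-k-2}{t+c}\to\cE\tw{-k-1}{t+a_0}\oplus\cE\tw{-k-1}{t+a_1}\to\cE\tw{-k}{t}\to0 .
\]
Since $H^3$ is right exact in long sequences on a threefold, vanishing of $H^3$ of the middle term forces vanishing for the right term. Starting from $H^3(\cE\tw{-k}{0})=0$ for $k\le 3$ (and $H^3(\cE)=0$ from aCM), and using that $H^3(\cE\tw{-k}{t})=0$ propagates upward in $t$ (because $\cE\tw{-k}{t}$ is a quotient of copies of $\cE\tw{-k}{t-1}$ via \eqref{eq:34} tensored, on a threefold), we get the bounds $t\ge (k-3)a_0$ by descending induction on $k$: each step down in $k$ shifts the allowed range by $a_0$, the smaller of the two twists. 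The $H^0$ statements in (d) follow dually, via the injection $\cE\tw{-k-2}{t+c}\hookrightarrow\cdots$ and monotonicity of $H^0$ in $t$, with the threshold $(k-1)a_0$ anchored at $k=1$, $t=0$.

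The $H^2$ and $H^1$ ranges in (b) and (c) come from the same sequence, now in the middle of the long exact sequence. For example, $H^2(\cE\tw{-k}{t})$ sits between $H^2$ of the middle term and $H^3(\cE\tw{-k-2}{t+c})$, and both are already controlled by the $H^3$ step and induction. The two-sided ranges for $k=3$ (e.g.\ $t\le0$ or $t\ge c$) arise because the anchor $H^i(\cE\tw{-3}{0})=0$ propagates downward via one sequence and upward via the other. For the $\Omega_\pi$ statements, we tensor \eqref{eq:34} and \eqref{eq:35} with $\cE\tw{-k}{t}$. Then $H^3(\cE\otimes\Omega_\pi\tw{-k}{t})$ is a quotient of $H^3(\cE\tw{-k}{t-2})^{\oplus3}$ via \eqref{eq:35}, giving the shift by $2$. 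Similarly, $H^0(\cE\otimes\Omega_\pi\tw{-k}{t})\subset H^0(\cE\tw{-k}{t-1})^{\oplus 3}$ via \eqref{eq:34}, giving the shift by $1$. The $H^1$ and $H^2$ cases are handled by combining the two sequences with the already-proved vanishings for $\cE$ in adjacent cohomological degrees.

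The main obstacle is the bookkeeping of the inductions: in each case one must choose the correct sequence and the correct direction so that the terms are already known at the previous step, especially in (c) with its exceptional cases $k\neq3$ and $k\neq1$. We expect to organize this as an induction first on $k$ (from $k=3$ down for $H^3$ and $H^2$, from $k=1$ up for $H^0$ and $H^1$) and then on $t$. Where a direct chase fails, we would switch to Serre duality and apply the dual statement to the Ulrich-type bundle $\cE^*\tw{2}{3-c}$.
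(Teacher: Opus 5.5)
Your argument for (a) is the paper's, including the shift by $2$ via \eqref{eq:35}. Your route to the first two items of (d) differs from the paper's and is correct. The injection $H^0(\cE\tw{-m}{s})\hookrightarrow H^0(\cE\tw{1-m}{s-a_1})\oplus H^0(\cE\tw{1-m}{s-a_0})$ from \eqref{eq:33} raises the threshold by $a_0$ per step, starting from $H^0(\cE\tw{-1}{t})=0$ for $t\le 0$. The paper instead dualizes (a) using the Ulrich dual $\nu=\cE^*\otimes\omega_X\tw{4}{0}=\cE^*\tw{2}{c-3}$; note the sign, which is not $\tw{2}{3-c}$, since $\omega_X\cong\cO_X\tw{-2}{c-3}$.

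The gap is in (b)--(c). The only concrete mechanism you give there fails: you say the two-sided ranges for $k=3$ come from the anchor $H^i(\cE\tw{-3}{0})=0$ propagating downward along one sequence and upward along another. Upward propagation would give $H^2(\cE\tw{-3}{t})=0$ for every $t\ge 0$, which is false. For $a_0=a_1=1$ the bundle $\cE=\Omega_\pi\tw{1}{1}$ is Ulrich (Corollary~\ref{cotangent}). Lemma~\ref{lem:vanishingbundle}(iii) then gives $H^2(\cE\tw{-3}{1})\cong H^1(\PP^2,T_{\PP^2}(-3))\cong H^1(\PP^2,\Omega^1_{\PP^2})\neq 0$, although $0<1<c$. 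Concretely, the step from $t=0$ to $t=1$ through \eqref{eq:34} is obstructed by $H^3(\cE\otimes\Omega_\pi\tw{-3}{1})$, which (a) only kills for $t\ge 2$.

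What is missing is the coupling of $H^2(\cE\tw{-3}{\cdot})$ with $H^1(\cE\tw{-1}{\cdot})$ through the $c$-shift in \eqref{eq:33}. Twisting \eqref{eq:33} by $\cE\tw{-2}{t-c}$ gives
\[
H^1(\cE\tw{-1}{t-c})\to H^2(\cE\tw{-3}{t})\to H^2(\cE\tw{-2}{t-a_0})\oplus H^2(\cE\tw{-2}{t-a_1}).
\]
So the range $t\ge c$ requires first $H^1(\cE\tw{-1}{s})=0$ for $s\ge 0$, which comes from \eqref{eq:34} and the second item of (b).

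The same coupling appears twice more:
\begin{itemize}
\item $H^1(\cE\tw{-1}{t})=0$ for $t\le -c$ needs the $t\le 0$ half of $H^2(\cE\tw{-3}{\cdot})$ together with $H^1(\cE\tw{-2}{u})=0$ for $u\le 0$.
\item The last item of (d) needs $H^0(\cE\tw{0}{s})=0$ for $s\le -c$. This lies outside your range $k\in\{1,\dots,4\}$, since your injection only moves $k$ upward. Via \eqref{eq:33} twisted by $\cE\tw{-1}{s}$ it again needs $H^1(\cE\tw{-2}{u})=0$ for $u\le 0$.
\end{itemize}
Hence the induction cannot be run degree by degree, ``first on $k$, then on $t$''. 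In particular, the lower $H^1$ ranges are anchored at $k=2$, not at $k=1$.

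The paper handles this with an interleaved order. It proves the upper $H^1$ ranges (from \eqref{eq:34}, \eqref{eq:33} and the first two items of (b)) before the $k=3$ part of (b). It then obtains the lower $H^1$ ranges and both halves of the $k=3$ items by Serre duality with $\nu$, combined with \eqref{eq:33}. This duality exchanges $H^2(\cE\tw{-3}{t})$ with $H^1(\nu\tw{-1}{-t})$, and $H^1(\cE\tw{-k}{t})$ with $H^2(\nu\tw{k-4}{-t})$. Until you supply such a dependency chain, (b), (c) and the last item of (d) are not proved.
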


\begin{proof}

Let $\cE$ be an Ulrich bundle on $X$.\

$(a)$  Let $k \in \{0,1,2,3\}$. Tensorizing the sequence \eqref{eq:34} by $\cE\tw{-3}{t+1}$ and taking cohomology, we get the surjection, $H^{3}(\cE\tw{-3}{t})^{\oplus 3} \To H^{3}(\cE\tw{-3}{t+1}) \To 0$. If $t=0$, the first term vanishes as $\cE$ is Ulrich and hence the next term vanishes. By recursion on $t$, we get $H^{3}(\cE\tw{-3}{t})=0$ for $t \geq 0$. Tensorizing the sequence \eqref{eq:33} by $\cE\tw{-3}{t}$ and taking cohomology, we get the surjection, $\bigoplus_{i=0}^1 H^{3}(\cE\tw{-3}{a_{i}+t}) \To H^{3}(\cE\tw{-2}{t}) \To 0$. If $t \geq -a_0$, then the first term vanishes and hence the next term vanishes. Therefore, we have $H^{3}(\cE\tw{-2}{t})=0$ for $t \geq -a_0$. Recursion on $k$ will settle the assertion for the remaining values of $k$ (i.e. for $k=0,1$).\ 

Tensorizing the sequence \eqref{eq:35} by $\cE\tw{-k}{t}$ and taking cohomology, we get the surjection, $H^{3}(\cE\tw{-k}{t-2})^{\oplus 3} \To H^{3}(\cE \otimes \Omega_{\pi} \tw{-k}{t}) \To 0$. If $t \geq 2+(k-3)a_0$, then by the deduction in the previous paragraph, the first term vanishes and hence the next term vanishes and we are through.\\

$(b)$  Let $k \in \{0,1,2\}$. Tensorizing the sequence \eqref{eq:34} by $\cE\tw{-2}{1}$ and taking cohomology, we have the following piece of long exact sequence,
\begin{equation}
  H^{2}(\cE\tw{-2}{0})^{\oplus 3} \To H^{2}(\cE\tw{-2}{1}) \To H^{3}(\cE \otimes \Omega_{\pi} \tw{-2}{1})
\end{equation}
The first term vanishes since $\cE$ is Ulrich and the third term vanishes by the second item of $(a)$. Therefore, $H^{2}(\cE \tw{-2}{1})=0$. By recursion on $t$, we get $H^{2}(\cE \tw{-2}{t})=0$ for $t \geq 0$. Next, tensorizing the sequence \eqref{eq:33} by $\cE\tw{-2}{t}$ and taking cohomology, we have the following piece of long exact sequence,
\begin{equation}
 \bigoplus_{i=0}^1 H^{2}(\cE\tw{-2}{a_{i}+t}) \To H^{2}(\cE\tw{-1}{t}) \To H^{3}(\cE\tw{-3}{t+c})
\end{equation}
If $t \geq -a_0$, then both the first and the third term vanishes and hence the middle term vanishes. Therefore, we have  $H^{2}(\cE \tw{-1}{t})=0$ for $t \geq -a_0$. Recursion on $k$ will settle the first item of $(b)$ for the remaining value of $k$ (i.e. for $k=0$).\

Tensorizing the sequence \eqref{eq:35} by $\cE\tw{-k}{t}$ and taking cohomology, we have the following piece of long exact sequence, 
\begin{equation}
H^{2}(\cE\tw{-k}{t-2})^{\oplus 3} \To H^{2}(\cE \otimes \Omega_{\pi} \tw{-k}{t}) \To   H^{3}(\cE\tw{-k}{t-3}) 
\end{equation}
 If $t  \geq 2+(k-2)a_0$, the first term vanishes and since this also means $t  \geq 3+(k-3)a_0$, the third term vanishes. Therefore, the middle term vanishes i.e. $H^2(\cE \otimes \Omega_{\pi}\tw{-k}{t})=0$ for $t \geq (k-2)a_0+1$. This settles the second item of $(b)$.\

 If we can show that for an Ulrich bundle $\nu$ on $X$, $H^{1}(\nu\tw{-1}{t})=0$ for $t \geq 0$ and $t \leq -c$ (this will be shown in the proof of the items of $(c)$ independently), then by Serre duality and \cite{UB}, Proposition $3.3.3(ii)$,  we get $H^{2}(\cE\tw{-3}{t})=0$ for $t \geq c$ and $t \leq 0$. This settles the third item of $(b)$.\
 

 Tensorizing the sequence \eqref{eq:35} by $\cE\tw{-3}{t}$ and taking cohomology, we have the following piece of long exact sequence, 
\begin{equation}
H^{2}(\cE\tw{-3}{t-2})^{\oplus 3} \To H^{2}(\cE \otimes \Omega_{\pi} \tw{-3}{t}) \To   H^{3}(\cE\tw{-3}{t-3}) 
\end{equation}
If $t  \geq 2+c$, the first term vanishes by the third item of $(b)$ (which as mentioned earlier will be proven independently in the proof of $(c)$) and since this also means $t-3  \geq 0$, by the first item of $(a)$  the third term vanishes. This means the middle term also vanishes and hence $H^{2}(\cE \otimes \Omega_{\pi} \tw{-3}{t})=0$ for $t \geq 2+c$. Note that, by Serre duality, we have, $H^{2}(\cE \otimes \Omega_{\pi} \tw{-3}{t}) \cong H^{1}(\nu \otimes \Omega_{\pi}^{*} \tw{-1}{-t})$, where $\nu$ is the Ulrich dual of $\cE$ in the sense of \cite{UB}, Proposition $3.3.3(ii)$ and in particular an Ulrich bundle on $X$. Tensorizing the dual of the sequence \eqref{eq:34} by $\nu\tw{-1}{-t}$ and taking cohomology, we have the piece of long exact sequence,
\begin{equation}
H^{1}(\nu\tw{-1}{1-t})^{\oplus 3} \To H^{1}(\nu \otimes \Omega_{\pi}^{*} \tw{-1}{-t}) \To   H^{2}(\nu\tw{-1}{-t}) 
\end{equation}
If $t \leq1$, since $\nu$ is Ulrich, the first term vanishes because of the fourth item of  $(c)$ (which will be proven independently later) and the third term vanishes because of the first item of $(b)$. Therefore, the middle term vanishes and hence $H^{2}(\cE \otimes \Omega_{\pi} \tw{-3}{t})=0$  for $t \leq 1$. This settles the fourth item of $(b)$.\\

$(c)$ If $\nu$ is the Ulrich dual of $\cE$ (in the sense of \cite {UB}, Proposition $3.3.3 (ii)$), then we have, $H^{1}(\cE\tw{-2}{t}) \cong H^{2}(\nu\tw{-2}{-t})$. If $t \leq 0$, since $\nu$ is Ulrich, by the first item of $(b)$, we have the later cohomology vanishes. Therefore, we have, $H^{1}(\cE\tw{-2}{t})=0$ for $t \leq 0$. Next, tensorizing the sequence \eqref{eq:33} by $\cE\tw{-2}{t}$ and taking cohomology, we have the following piece of the long exact sequence,
\begin{equation}
\bigoplus_{i=0}^1 H^{1}(\cE\tw{-2}{a_{i}+t}) \To H^{1}(\cE\tw{-1}{t}) \To H^{2}(\cE\tw{-3}{c+t}).
\end{equation}
If $t \leq -c$, then the first term vanishes by the just obtained $H^1$ vanishing. By Serre duality,  the third term is isomorphic to $H^1(\nu\tw{-1}{-c-t})$, where $\nu$ is the the Ulrich dual of $\cE$ in the sense of \cite {UB}, Proposition $3.3.3 (ii)$ (and hence Ulrich), which vanishes due to the fourth item of $(c)$ (which will be shown independently). Hence the middle term vanishes i.e. $H^{1}(\cE\tw{-1}{t}) =0$ for $t \leq -c$. This settles the first item of $(c)$.\


Note that by Serre duality, $H^{1}(\cE\tw{-3}{t}) \cong H^{2}(\nu\tw{-1}{-t})$. If $t \leq a_0$, the later cohomology vanishes by virtue of the first item of $(b)$. This settles the second item of $(c)$.\

Let $k \in \{2,3\}$. Note that by Serre duality $H^{1}(\cE \otimes \Omega_{\pi} \tw{-k}{t}) \cong H^{2}(\nu \otimes \Omega_{\pi}^{*} \tw{k-4}{-t})$, where $\nu$ is the Ulrich dual of $\cE$ in the sense of \cite{UB}, Proposition $3.3.3(ii)$ (and hence is Ulrich). Therefore, our work boils down to establishing the vanishing of $H^{2}(\nu \otimes \Omega_{\pi}^{*} \tw{k-4}{-t})$. Tensorizing the dual of the sequence \eqref{eq:34} by $\nu\tw{k-4}{-t}$ and taking cohomology, we have the following piece of long exact sequence,
\begin{equation}
  H^{2}(\nu\tw{k-4}{1-t})^{\oplus 3}  \To H^{2}(\nu \otimes \Omega_{\pi}^{*} \tw{k-4}{-t}) \To  H^{3}(\nu\tw{k-4}{-t})
\end{equation}
If $t \leq (k-2)a_0+1$, since $\nu$ is Ulrich, the first term vanishes due to the first item of $(b)$ and the third term vanishes due to the first item of $(a)$. Therefore, the middle term vanishes i.e. we get $H^{1}(\cE \otimes \Omega_{\pi} \tw{-k}{t})=0$ for $t \leq (k-2)a_0+1$. This settles the third item of $(c)$.\

For the fourth item of $(c)$, we proceed as follows: let $k \in \{0,1\}$. Tensorizing the sequence \eqref{eq:34} by $\cE\tw{-1}{t+1}$ and taking cohomology, we have the following piece of long exact sequence,
\begin{equation}
  H^{1}(\cE\tw{-1}{t})^{\oplus 3} \To H^{1}(\cE\tw{-1}{t+1}) \To H^{2}(\cE \otimes \Omega_{\pi} \tw{-1}{t+1})
\end{equation}
If $t=0$, the first term vanishes since $\cE$ is Ulrich and the third term vanishes due to the second item of $(b)$ and hence the middle term vanishes i.e. $H^{1}(\cE\tw{-1}{1})=0$. By recursion on $t$, we get $H^{1}(\cE\tw{-1}{t})=0$ for $t \geq 0$. Tensorizing the sequence \eqref{eq:33} by $\cE\tw{-1}{t}$ and taking cohomology, we have the following piece of the long exact sequence,
\begin{equation}
\bigoplus_{i=0}^1 H^{1}(\cE\tw{-1}{a_{i}+t}) \To H^{1}(\cE\tw{0}{t}) \To H^{2}(\cE\tw{-2}{c+t}).
\end{equation}
If $t \geq -a_0$, then the first term vanishes by the just obtained $H^1$ vanishing and the third term vanishes by the first item of $(b)$. Hence the middle term vanishes i.e. $H^{1}(\cE\tw{0}{t})=0$ for $t \geq -a_0$. This settles the fourth item of $(c)$.\



For the fifth item of $(c)$, we proceed as follows: let $k \in \{0,1\}$. Tensorizing the sequence \eqref{eq:35} by $\cE\tw{-k}{t}$ and taking cohomology, we have the following piece of long exact sequence, 
\begin{equation}
H^{1}(\cE\tw{-k}{t-2})^{\oplus 3} \To H^{1}(\cE \otimes \Omega_{\pi} \tw{-k}{t}) \To   H^{2}(\cE\tw{-k}{t-3}) 
\end{equation}

If $t  \geq 2+(k-1)a_0$, then the first term vanishes due to the fourth item of $(c)$ and the third term vanishes beacuse of the first item of $(b)$. This settles the fifth item of $(c)$\\ 


$(d)$ Let $k \in \{1,2,3,4\}$. By Serre duality, $H^{0}(\cE\tw{-k}{t}) \cong H^{3}(\nu\tw{-(4-k)}{-t}) $, where $\nu$ is the Ulrich dual  of $\cE$ in the sense of \cite{UB}, Proposition $3.3.3(ii)$ (and hence $\nu$ is Ulrich). If $t \leq (k-1)a_0$, since $\nu$ is Ulrich, the later cohomology vanishes due to the first item of $(a)$. This settles the first item of $(d)$.\

By Serre duality, $H^{0}(\cE \otimes \Omega_{\pi}\tw{-k}{t}) \cong H^{3}(\nu \otimes \Omega_{\pi}^{*}\tw{-(4-k)}{-t}) $, where $\nu$ is the Ulrich dual  of $\cE$ in the sense of \cite{UB}, Proposition $3.3.3(ii)$ (and hence $\nu$ is Ulrich). Tensorizing the dual of the sequence  \eqref{eq:34} by $\nu\tw{-(4-k)}{-t} $ and taking cohomology, we get the surjection, $H^{3}(\nu\tw{-(4-k)}{1-t})^{\oplus 3} \To H^{3}(\nu \otimes \Omega_{\pi}^{*} \tw{-(4-k)}{-t}) \To   0 $.  If $t \leq 1+(k-1)a_0$, then as $\nu$ is Ulrich, the first term vanishes by the first item of $(a)$ and hence the second term vanishes. Therefore, we have $H^{0}(\cE \otimes \Omega_{\pi} \tw{-k}{t})=0$  for $t \leq 1+(k-1)a_0$. This settles the second item of $(d)$.\




For the last item of $(d)$ we  proceed as follows: we will first show that for an Ulrich bundle $\cE$ on $X$, we have $H^{3}(\cE  \tw{-4}{t})=0$  for $t \geq c$. Tensorizing the dual of the sequence \eqref{eq:33} by $\cE\tw{-3}{t}$ and taking cohomology, we have the following piece of long exact sequence, 
\begin{equation}
H^{2}(\cE\tw{-2}{t-c}) \To H^{3}(\cE \tw{-4}{t}) \To   \bigoplus_{i=0}^1 H^{3}(\cE\tw{-3}{t-a_i})
\end{equation}
If $t \geq c$, then the first term vanishes due to the first item of $(b)$ and the third term vanishes due  to the first item of $(a)$. Therefore, the middle term vanishes and we have $H^{3}(\cE  \tw{-4}{t})=0$  for $t \geq c$. With this observation at hand, let us note that by Serre duality, we have $H^{0}(\cE \otimes \Omega_{\pi}\tw{0}{t}) \cong H^{3}(\nu \otimes \Omega_{\pi}^{*}\tw{-4}{-t}) $, where $\nu$ is the Ulrich dual  of $\cE$ in the sense of \cite{UB}, Proposition $3.3.3(ii)$ (and hence $\nu$ is Ulrich). Tensorizing the dual of the sequence  \eqref{eq:34} by $\nu\tw{-4}{-t} $ and taking cohomology, we get the surjection, $H^{3}(\nu\tw{-4}{1-t})^{\oplus 3} \To H^{3}(\nu \otimes \Omega_{\pi}^{*} \tw{-4}{-t}) \To   0 $. If $t \leq 1-c$, then since $\nu$ is Ulrich, the first term vanishes by the just obtained vanishing. Therefore, the second term vanishes and we have  $H^{0}(\cE \otimes \Omega_{\pi}\tw{0}{t})=0$ for $t \leq 1-c$, which settles the last item of $(d)$. \

\end{proof}

\begin{remark}\label{rem}
$(i)$ From Proposition \ref{riv2}, it follows that  Ulrich bundles on a smooth toric threefold with Picard number $2$ are \emph{regular} in the sense of definition \ref{def:pqreg3}. So it is also \emph{$(p,q)$-regular} for non-negative integers $p,q$ and other cohomological vanishings can be obtained. \

$(ii)$  One can use the generalized Hoppe's criterion over polycyclic varieties (see \cite{Gauge}, Theorem $3$, the converse direction) coupled with the property concerning the first Chern class of an Ulrich bundle over any smooth projective variety (See \cite{Tangent}, Lemma $3.2(i)$) to obtain  sharper  bounds for the zeroth and the top Cohomology vanishing for an Ulrich bundle (and its tensorization with $\Omega_{\pi}$) on $X$ as follows:\

\begin{itemize}

    \item For $k \in \{3,4\}$, 
    $\begin{sistema}
    H^{0}(\cE\tw{-k}{t})=0 \ \text{for $t \leq \frac{((2k-3)c+3)c-2(k-1)a_{0}a_{1}}{2c}$},\\

     H^{0}(\cE \otimes \Omega_{\pi}  \tw{-k}{t})=0 \ \text{for $t \leq \frac{((2k-3)c+3)c-2(k-1)a_{0}a_{1}}{2c}+1$},\\
        
    \end{sistema}$

    \item $\begin{sistema}
        H^{0}(\cE \tw{-2}{t})=0  \ \text{for $t \leq \frac{c(c+3)-2a_{0}a_{1}}{2c}$, if $2a_{0}a_{1} \leq a^2_1-a^2_0+3c$},\\

         H^{0}(\cE \otimes \Omega_{\pi} \tw{-2}{t})=0  \ \text{for $t \leq \frac{c(c+3)-2a_{0}a_{1}}{2c}+1$, if $2a_{0}a_{1} \leq a^2_1-a^2_0+3c$},\\

        H^{0}(\cE  \tw{-2}{t})=0  \ \text{for $t \leq a_0$, if $2a_{0}a_{1} > a^2_1-a^2_0+3c.$},\\

        H^{0}(\cE \otimes \Omega_{\pi} \tw{-2}{t})=0  \ \text{for $t \leq a_0+1$, if $2a_{0}a_{1} > a^2_1-a^2_0+3c.$},\\

    \end{sistema}$

    \item $\begin{sistema}
    H^{0}(\cE\tw{-1}{t})=0\ \text{for $t \leq 0$},\\

    H^{0}(\cE \otimes \Omega_{\pi} \tw{-1}{t})=0\ \text{for $t \leq 1$},\\

    \end{sistema}$

    \item For $k \in \{0,1\}$, 
    $\begin{sistema}
    H^{3}(\cE\tw{-k}{t})=0 \ \text{for $t \geq \frac{2(3-k)a_{0}a_{1}-((5-2k)c+3)c}{2c}$},\\

     H^{3}(\cE \otimes \Omega_{\pi} \tw{-k}{t})=0 \ \text{for $t \geq \frac{2(3-k)a_{0}a_{1}-((5-2k)c+3)c}{2c} +2$},\\

    \end{sistema}$

    \item $\begin{sistema}
        H^{3}(\cE \tw{-2}{t})=0  \ \text{for $t \geq \frac{2a_{0}a_{1}-c(c+3)}{2c}$, if $2a_{0}a_{1} \leq a^2_1-a^2_0+3c$},\\

         H^{3}(\cE \otimes \Omega_{\pi} \tw{-2}{t})=0  \ \text{for $t \geq \frac{2a_{0}a_{1}-c(c+3)}{2c}+2$, if $2a_{0}a_{1} \leq a^2_1-a^2_0+3c$},\\

          H^{3}(\cE  \tw{-2}{t})=0  \ \text{for $t \geq -a_0$, if $2a_{0}a_{1} > a^2_1-a^2_0+3c.$},\\

        H^{3}(\cE \otimes \Omega_{\pi} \tw{-2}{t})=0  \ \text{for $t \geq 2-a_0$, if $2a_{0}a_{1} > a^2_1-a^2_0+3c.$},\\

    \end{sistema}$

    \item  $\begin{sistema}
    H^{3}(\cE\tw{-3}{t})=0 \ \text{for $t \geq 0$},\\

    H^{3}(\cE \otimes \Omega_{\pi} \tw{-3}{t})=0 \ \text{for $t \geq 2$}
        
    \end{sistema}.$\

\end{itemize}
We don't record these improvements in Proposition \ref{riv2}, as we don't require the improved bounds for cohomology vanishing to obtain the main resolution of Ulrich bundles on $X$. Also note that this improvements will be  useful in the next section for obtaining  resolution, monadic description of Ulrich bundles on special subclass (i.e. for special values of $c$) of smooth toric threefolds with Picard number $2$. \\
\end{remark}

\section{Resolutions of Ulrich bundles }\label{Res}

Equipped with the cohomological vanishing properties of Ulrich bundles in the previous section, in this section we focus on the main theorem of this paper i.e. construction of a resolution of Ulrich bundles on the threefolds under consideration. Towards this, we consider the following full exceptional collection (see \cite{Orlov}) that we need for working with the Beilinson theorem,

\begin{equation}\label{col5}\begin{aligned}
& \cE_5[k_5]= \cO_X\tw{-1}{c-2}[-2], \cE_4[k_4]=\Omega_{\pi}\tw{-1}{c}[-2], \cE_3[k_3]=\cO_X\tw{-1}{c-1}[-2],\\
& \cE_2[k_2]= \cO_X\tw{0}{-1}, \cE_1[k_1]= \Omega_{\pi}\tw{0}{1}, \cE_0[k_0]= \cO_X.\\
\end{aligned}
\end{equation}


The associated full exceptional collection $\langle F_5=\cF_5, \ldots, F_0=\cF_0\rangle$ of Theorem \ref{use} is

\begin{equation}\label{col6}\begin{aligned}
& \cF_5= \cO_X\tw{-1}{-1}, \cF_4=\cO_X\tw{-1}{0}, \cF_3=\cO_X\tw{-1}{1},\\
& \cF_2= \cO_X\tw{0}{-2}, \cF_1= \cO_X\tw{0}{-1}, \cF_0= \cO_X.\\
\end{aligned}
\end{equation}

For each sheaf $\mathcal G$ and integer $j$ we denote by $\mathcal G[j]$ the complex $G$ such that
$$
 G_i=\left\lbrace\begin{array}{ll} 
0\quad&\text{if $i\ne j$,}\\
\mathcal G \quad&\text{if $i=j$,}
\end{array}\right.
$$
with the trivial differentials: we will omit $[0]$ in the notation.\\

For an Ulrich bundle $\cE$ on $X$, let us also introduce the following notations:
\begin{itemize}
    \item $a_i^{j,k}:=h^{i}(\cE\tw{j}{k})$.\

    \item $b_i^{j,k}:=h^{i}(\cE \otimes \Omega_{\pi}\tw{j}{k})$
\end{itemize}

In what follows, we give a resolution of an Ulrich bundle on smooth toric threefolds $X$ with Picard number $2$.\


\begin{theorem}\label{main} (Resolution of Ulrich bundles on $X$)\

With the notation as above, let $\cE$ be an Ulrich bundle on $X$. Then $\cE$ arises from an exact sequence of the form:


\begin{equation}\label{ww}
0\to \cO_X\tw{-1}{-1}^{\oplus a_0^{-1,c-2}}\to \cO_X\tw{-1}{0}^{\oplus b_0^{-1,c}} \bigoplus \cO_X\tw{0}{-2}^{\oplus a_0^{0,-1}}\to 
\end{equation}
$$ \to \cO_X\tw{-1}{1}^{\oplus a_0^{-1,c-1}} \bigoplus \cO_X\tw{0}{-1}^{\oplus b_0^{0,1}} \to \cO_X^{\oplus a_0^{0,0}} \to \cE \to 0$$
    
\end{theorem}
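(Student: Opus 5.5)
The plan is to apply the Beilinson spectral sequence of Theorem \ref{use} to $A=\cE$, using the full exceptional collection \eqref{col5} and its dual \eqref{col6}. The $E_1$-terms are
\[
E_1^{p,q}=H^{q+k_{-p}}(\cE\otimes\cE_{-p})\otimes\cF_{-p},\qquad -5\le p\le 0,\ 0\le q\le 5 .
\]
The shifts are $k_0=k_1=k_2=0$ and $k_3=k_4=k_5=-2$; the latter account for the $[-2]$ in \eqref{col5}. So the columns $p=0,-1,-2$ carry the cohomology of $\cE$, $\cE\otimes\Omega_\pi\tw{0}{1}$ and $\cE\tw{0}{-1}$, tensored with $\cO_X$, $\cO_X\tw{0}{-1}$ and $\cO_X\tw{0}{-2}$ respectively. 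The columns $p=-3,-4,-5$ carry $H^{q-2}$ of $\cE\tw{-1}{c-1}$, $\cE\otimes\Omega_\pi\tw{-1}{c}$ and $\cE\tw{-1}{c-2}$, tensored with $\cO_X\tw{-1}{1}$, $\cO_X\tw{-1}{0}$ and $\cO_X\tw{-1}{-1}$. Before anything else I would double-check this bookkeeping, and the dual property \eqref{order}, against the convention of Theorem \ref{use}. The surviving entries should land exactly on the diagonal $p+q=0$ at $q=0,1,2,3$, so the degree placement of the $[-2]$ shifts matters.

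The first key step is to kill every off-diagonal term using Proposition \ref{riv2}.

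In column $p=0$, we need $H^i(\cE)=0$ for $i\ge1$. This is (c) with $k=0,t=0$, (b) with $k=0$, and (a) with $k=0$, since $t=0\ge -3a_0$ and $t\ge -2a_0$. Only $H^0(\cE)$ survives, at $(0,0)$.

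In column $p=-1$, we need $H^i(\cE\otimes\Omega_\pi\tw{0}{1})$. Items (a) and (b) give vanishing of $H^3$ and $H^2$ for $1\ge 2-3a_0$ and $1\ge 2-2a_0$, which hold since $a_0\ge1$. For $H^1$ the fifth item of (c) needs $t\ge 2-a_0$, so $t=1$ is fine. What survives is $H^0$, placed at $(-1,1)$ since $p+q=0$ forces $q=1$.

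Here a reindexing issue appears. In Theorem \ref{use} the entry at $(p,q)$ is $H^{q+k_{-p}}$, so $H^0$ of column $-1$ sits at $q=0$, not $q=1$. That would put it at total degree $-1$, which is the correct place for a resolution term. So the intended picture is a standard resolution: the terms with $p+q=-j$ give the $j$-th syzygy module.

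With that in mind, the $H^0$ terms of columns $0,-1,-2$ sit at $q=0$. The columns $-3,-4,-5$ contribute $H^0$ at $q=2$, that is $H^{q-2}$ with $q=2$, giving total degrees $-1,-2,-3$. These match the four terms of \eqref{ww}:
\begin{itemize}
\item total degree $0$: $\cO_X^{\oplus a_0^{0,0}}$;
\item total degree $-1$: $\cO_X\tw{0}{-1}^{b_0^{0,1}}$ and $\cO_X\tw{-1}{1}^{a_0^{-1,c-1}}$;
\item total degree $-2$: $\cO_X\tw{0}{-2}^{a_0^{0,-1}}$ and $\cO_X\tw{-1}{0}^{b_0^{-1,c}}$;
\item total degree $-3$: $\cO_X\tw{-1}{-1}^{a_0^{-1,c-2}}$.
\end{itemize}

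So the vanishings to verify are the following.
\begin{itemize}
\item In column $p=-2$, all $H^{i\ge1}(\cE\tw{0}{-1})$ vanish: (c) with $t=-1\ge-a_0$, (b) with $-1\ge -2a_0$, and (a).
\item In columns $-3,-4,-5$, we need $H^{i}$ for $i=1,2,3$ of $\cE\tw{-1}{c-1}$, $\cE\otimes\Omega_\pi\tw{-1}{c}$ and $\cE\tw{-1}{c-2}$. These come from (a), (b) and the $k=1$ cases of (c): the $H^1$ vanishing for $t\ge0$ applies since $c-2\ge0$, and for $\Omega_\pi$ it needs $t\ge2$, which holds since $c\ge2$. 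The $H^2$ and $H^3$ vanishings need $t\ge -a_0$ and $t\ge 2-a_0$, and both are clear.
\end{itemize}

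Hence the only nonzero terms are the $H^0$'s. They lie on the line $q=0$ for columns $0,-1,-2$ and on $q=2$ for columns $-3,-4,-5$.

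Next I would argue degeneration. There are no $H^{\ge1}$ terms, and the differentials $d_r$ for $r\ge2$ shift $q$ by $1-r$, so the only possibly nonzero higher differential is $d_3$, from row $q=2$ to row $q=0$. Rather than analyse the spectral sequence by hand, I would invoke Remark \ref{rembeil}. The dual collection \eqref{col6} consists of line bundles forming a strong collection, which follows by checking Lemma \ref{lem:vanishing}. So there is a complex $L^\bullet$ with $L^k=\bigoplus_{p+q=k}E_1^{p,q}$ whose only cohomology is $\cE$ in degree $0$. With the vanishings above, $L^\bullet$ is exactly the four-term complex in \eqref{ww}, with no terms in positive degree. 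Exactness away from degree $0$, together with $H^0=\cE$ being a cokernel, gives the exact sequence \eqref{ww}.

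I expect the main obstacle to be the index bookkeeping. Confirming that the $[-2]$ shifts place the $H^0$'s of columns $3,4,5$ at total degrees $-1,-2,-3$, and not at positive degrees, requires care with the sign conventions in Theorem \ref{use} and Remark \ref{rembeil}. The boundary cases, such as $c=2$ for the $H^1(\cE\otimes\Omega_\pi\tw{-1}{c})$ vanishing, also need checking.
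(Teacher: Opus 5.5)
Your proposal is correct and is essentially the paper's proof. You build the Beilinson table for $\cE$ from the collection \eqref{col5} and its dual \eqref{col6}, with $k_0=k_1=k_2=0$ and $k_3=k_4=k_5=-2$. You use Proposition~\ref{riv2} to kill everything except the $H^0$ entries, which sit at $q=0$ in columns $p=0,-1,-2$ and at $q=2$ in columns $p=-3,-4,-5$. You then apply the strong Beilinson theorem of Remark~\ref{rembeil}, which is available because \eqref{col6} is strong. Discard only your initial guess that the survivors lie on $p+q=0$. Your corrected placement in total degrees $0,-1,-2,-3$ is the right one and reproduces the paper's table and the sequence \eqref{ww}.
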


\begin{proof}

Let $\cE$ be an Ulrich bundle on $X$. We consider the Beilinson type spectral sequence associated to $\cE$
and identify the members of the graded sheaf associated to the induced filtration as the sheaves mentioned in the statement of the Theorem \ref{use}. We consider the full exceptional collection $\cE_{\bullet}$ as in (\ref{col5}) and the right dual collection $\cF_{\bullet}$ as in (\ref{col6}).\
    

We construct a Beilinson complex, quasi-isomorphic to $\cE$, by computing $H^{i+k_j}(\cE\otimes \cE_j)\otimes \cF_j$ with  $i,j \in \{0, \ldots, 5\}$ and the vanishing properties of the Ulrich bundle $\cE$ as obtained in Proposition \ref{riv2} to get the following table:

 \begin{center}\begin{tabular}{|c|c|c|c|c|c|c|c|c|c|c|}
\hline
 $\cO_X\tw{-1}{-1}$ & $\cO_X\tw{-1}{0}$ & $\cO_X\tw{-1}{1}$ & $\cO_X\tw{0}{-2}$ & $\cO_X\tw{0}{-1}$ & $\cO_X$ \\
 \hline
 \hline
$0$	&	$0$	&	$0$		&	$*$		&	$*$		&	$*$	\\
$0$	&	$0$	&	$0$	&	$*$	&	$*$		&	$*$	\\
$0$	& 	$0$	&	$0$	&	$0$	&	$0$	&	$0$	\\
$a_0^{-1,c-2}$	& 	$b_0^{-1,c}$	&	$a_0^{-1,c-1}$	&	$0$	&	$0$	&	$0$	\\
$*$		&	$*$ 	 	&	$*$	&	$0$	& 	$0$	& 	$0$	\\
$*$		&	$*$		&	$*$		&	$a_0^{0,-1}$		&	$b_0^{0,1}$	& 	$a_0^{0,0}$ \\
\hline
$\cO_X\tw{-1}{c-2}$		& $\Omega_{\pi}\tw{-1}{c} $	 & $\cO_X\tw{-1}{c-1}$	 & $\cO_X\tw{0}{-1}$		& $\Omega_{\pi}\tw{0}{1}$		&$\cO_X$\\
\hline
\end{tabular}
\end{center}

Since $Ext^k(F_i,F_j) = 0$ for $k>0$ and any $i,j$, we have that the  collection (\ref{col6}) is strong and hence by the strong form of Beilinson's theorem (as in Remark \ref{rembeil}), we get the claimed resolution.\


\end{proof}

Note that in the above theorem, we have only used Proposition \ref{riv2} for the cohomological vanishing results. Instead if we use the improvements as in Remark \ref{rem} $(ii)$ and carry out the same procedure on $\mathcal E\tw{-1}{0}$ (instead of $\mathcal E$) as in the proof of the Theorem \ref{main}, then for $c \leq 3$, we obtain a Beilinson table, where only the $h^1$ terms survive. To be more precise, we get the following table:

 \begin{center}\begin{tabular}{|c|c|c|c|c|c|c|c|c|c|c|}
\hline
 $\cO_X\tw{-1}{-1}$ & $\cO_X\tw{-1}{0}$ & $\cO_X\tw{-1}{1}$ & $\cO_X\tw{0}{-2}$ & $\cO_X\tw{0}{-1}$ & $\cO_X$ \\
 \hline
 \hline
$0$	&	$0$	&	$0$		&	$*$		&	$*$		&	$*$	\\
$0$	&	$0$	&	$0$	&	$*$	&	$*$		&	$*$	\\
$a_1^{-2, c-2}$	& 	$b_1^{-2,c}$	&	$a_1^{-2,c-1}$	&	$0$	&	$0$	&	$0$	\\
$0$	& 	$0$	&	$0$	&	$0$	&	$0$	&	$0$	\\
$*$		&	$*$ 	 	&	$*$	&	$a_1^{-1,-1}$	& 	$b_1^{-1,1}$	& 	$0$	\\
$*$		&	$*$		&	$*$		&	$0$		&	$0$	& 	$0$ \\
\hline
$\cO_X\tw{-1}{c-2}$		& $\Omega_{\pi}\tw{-1}{c} $	 & $\cO_X\tw{-1}{c-1}$	 & $\cO_X\tw{0}{-1}$		& $\Omega_{\pi}\tw{0}{1}$		&$\cO_X$\\
\hline
\end{tabular}
\end{center}

This means for $c \leq 3$, $\mathcal E\tw{-1}{0}$ fits into an exact sequence of the form:

\begin{equation*}
0\to \cO_X\tw{-1}{-1}^{\oplus a_1^{-2, c-2}} \to \cO_X\tw{-1}{0}^{\oplus b_1^{-2,c}} \bigoplus \cO_X\tw{0}{-2}^{\oplus a_1^{-1,-1}} \to \\
 \end{equation*}   

 $$\to \cO_X\tw{-1}{1}^{\oplus a_1^{-2,c-1}}  \bigoplus  \cO_X\tw{0}{-1}^{\oplus b_1^{-1,1}} \to \mathcal E\tw{-1}{0} \to 0.$$\


We summarize the above discussion in the following Proposition.

\begin{proposition}\label{lowc}(Resolution for low values of $c$)\\
With the notation as above, let $\cE$ be an Ulrich bundle on $X$. If $c \leq 3$, then $\cE$ fits into an exact sequence of the form:

\begin{equation}
0\to \cO_X\tw{0}{-1}^{\oplus a_1^{-2, c-2}} \to  \cO_X^{\oplus b_1^{-2,c}} \bigoplus \cO_X\tw{1}{-2}^{\oplus a_1^{-1,-1}} \to 
\end{equation}
$$\to \cO_X\tw{0}{1}^{\oplus a_1^{-2,c-1}} \bigoplus  \cO_X\tw{1}{-1}^{\oplus b_1^{-1,1}} \to \mathcal E \to 0.$$
    
\end{proposition}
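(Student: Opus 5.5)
The plan is to run the same Beilinson machinery as in the proof of Theorem \ref{main}, applied to $\cE\tw{-1}{0}$ instead of $\cE$, and then twist the result by $\cO_X\tw{1}{0}$. We use the full exceptional collection \eqref{col5} and its right dual \eqref{col6}, which is strong, so Remark \ref{rembeil} provides a complex $L^\bullet$ of direct sums of the $\cF_j$ whose only cohomology is $\cE\tw{-1}{0}$ in degree $0$. The entries of $L^\bullet$ are $H^{q+k_{-p}}(\cE\tw{-1}{0}\otimes\cE_{-p})\otimes\cF_{-p}$, and these are the groups we must compute.

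\textbf{Step 1: identify the relevant groups.} For $j=0,1,2$ (with $k_j=0$) we need
\[
H^i(\cE\tw{-1}{0}),\quad H^i(\cE\otimes\Omega_\pi\tw{-1}{1}),\quad H^i(\cE\tw{-1}{-1}).
\]
For $j=3,4,5$ (shift by $2$) we need
\[
H^i(\cE\tw{-2}{c-1}),\quad H^i(\cE\otimes\Omega_\pi\tw{-2}{c}),\quad H^i(\cE\tw{-2}{c-2}).
\]
The goal is to show that, for $c\le 3$, all of these vanish except $H^1$. That is, we must establish vanishing of $H^0$, $H^2$ and $H^3$ for each of the six groups.

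\textbf{Step 2: dispose of the easy vanishings.} For $\cE\tw{-1}{t}$ we have:
\begin{itemize}
\item $H^0=0$ for $t\le 0$, by Proposition \ref{riv2}(d);
\item $H^2=0$ for $t\ge -a_0$, by (b);
\item $H^3=0$ for $t\ge -2a_0$, by (a).
\end{itemize}
This covers $t=0,-1$. Similarly, $\Omega_\pi\tw{-1}{1}$ is handled by $H^0$ vanishing for $t\le 1$ and $H^2,H^3$ vanishing for $t\ge 2-a_0$; the latter needs $a_0\ge 1$, which holds. For the $\tw{-2}{\cdot}$ terms, (b) gives $H^2(\cE\tw{-2}{t})=0$ for $t\ge 0$ and $H^2(\cE\otimes\Omega_\pi\tw{-2}{t})=0$ for $t\ge 2$. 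These apply at $t=c-1,c-2$ and $t=c$, since $c\ge 2$.

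\textbf{Step 3: the main obstacle.} What remains is $H^0$ and $H^3$ of the $\tw{-2}{\cdot}$ terms. The crude bounds of Proposition \ref{riv2}, namely $H^0(\cE\tw{-2}{t})=0$ only for $t\le a_0$ and $H^3$ vanishing for $t\ge -a_0$, do not reach $t=c-2$. Here we invoke the sharper bounds of Remark \ref{rem}(ii), and the content of the argument is the arithmetic check. Since $c\le 3$ and $1\le a_0\le a_1$, we have $(a_0,a_1)\in\{(1,1),(1,2)\}$; these are the only cases.
\begin{itemize}
\item For $(1,1)$: $2a_0a_1=2\le 0+6$, so the first branch applies, and the $H^0$ bound is $t\le (2\cdot5-2)/4=2$. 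This covers $t\le c=2$.
\item For $(1,2)$: $4\le 3+9$, and the bound is $t\le (3\cdot6-4)/6=7/3$, so $t\le 2$ works. This covers $c-1$ and $c-2$. For the $\Omega_\pi$ term the bound shifts by $1$, giving $t\le 3=c$.
\end{itemize}
The $H^3$ terms at $t=c-2,c-1,c$ are covered comfortably by the bound $t\ge (2a_0a_1-c(c+3))/(2c)$, which is negative.

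\textbf{Step 4: assemble.} After these checks the table has nonzero entries only in the positions displayed before the statement. The complex $L^\bullet$ is therefore the three-term complex given there. Since its only cohomology is $\cE\tw{-1}{0}$ in degree $0$, it is exact on the left and surjects onto $\cE\tw{-1}{0}$. Tensoring by $\cO_X\tw{1}{0}$ gives the claimed resolution of $\cE$.
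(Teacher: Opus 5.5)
Your proposal is correct and follows the paper's own argument: apply the Beilinson machinery with \eqref{col5} and \eqref{col6} to $\cE\tw{-1}{0}$, use Proposition \ref{riv2} and the sharper bounds of Remark \ref{rem}(ii) to kill every entry except the $H^1$ ones, and twist by $\tw{1}{0}$. Your reduction to $(a_0,a_1)\in\{(1,1),(1,2)\}$ simply makes the paper's arithmetic explicit.

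One small correction to Step 3. The crude bounds of Proposition \ref{riv2} already handle $t=c-2$, all the $H^3$ terms, and the whole case $(1,1)$; the improvement is needed only for $H^0(\cE\tw{-2}{c-1})$ and $H^0(\cE\otimes\Omega_\pi\tw{-2}{c})$ when $(a_0,a_1)=(1,2)$, where the slope of $\cE\tw{-2}{2}$ is strictly negative. For $(1,1)$ you should not claim $H^0(\cE\tw{-2}{2})=0$ at the endpoint $t=2$: it fails for $\cE=\cO_X\tw{2}{-2}$, though your argument never uses it.
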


Note that, instead if we carry out the same procedure on $\cE\tw{-2}{0}$ (instead of $\cE$ and $\cE\tw{-1}{0}$) as in the proof of the Theorem \ref{main} and use the improvements as in Remark \ref{rem}$(ii)$, then we obtain a Beilinson table, where possibly the $h^1$ and  $h^2$ terms survive. To be more precise, we get the following table:

 \begin{center}\begin{tabular}{|c|c|c|c|c|c|c|c|c|c|c|}
\hline
 $\cO_X\tw{-1}{-1}$ & $\cO_X\tw{-1}{0}$ & $\cO_X\tw{-1}{1}$ & $\cO_X\tw{0}{-2}$ & $\cO_X\tw{0}{-1}$ & $\cO_X$ \\
 \hline
 \hline
$0$	&	$0$	&	$0$		&	$*$		&	$*$		&	$*$	\\
$a_2^{-3, c-2}$	&	$b_2^{-3,c}$	&	$a_2^{-3,c-1}$	&	$*$	&	$*$		&	$*$	\\
$a_1^{-3, c-2}$	& 	$b_1^{-3,c}$	&	$a_1^{-3,c-1}$	&	$0$	&	$0$	&	$0$	\\
$0$	& 	$0$	&	$0$	&	$a_2^{-2,-1}$	&	$b_2^{-2,1}$	&	$0$	\\
$*$		&	$*$ 	 	&	$*$	&	$0$	& 	$0$	& 	$0$	\\
$*$		&	$*$		&	$*$		&	$0$		&	$0$	& 	$0$ \\
\hline
$\cO_X\tw{-1}{c-2}$		& $\Omega_{\pi}\tw{-1}{c} $	 & $\cO_X\tw{-1}{c-1}$	 & $\cO_X\tw{0}{-1}$		& $\Omega_{\pi}\tw{0}{1}$		&$\cO_X$\\
\hline
\end{tabular}
\end{center}

If $a_0=a_1$ and $a_1^{-3,c-1}=0$, then one can use \cite{UB}, Proposition $3.3.3(ii)$, S.E.S \eqref{eq:34} and Proposition \ref{riv2} $(a)$ first item to deduce that $a_1^{-3,c-2}=0$. Also in this situation, from the S.E.S \eqref{eq:34} and Proposition \ref{riv2} $(d)$ first item it follows that $b_1^{-3,c}=0$. Note that if $a_0=a_1=1$, then we automatically have our starting assumption $a_1^{-3,c-1}=0$ by Proposition \ref{riv2}$(c)$ second item.\

On the other hand, if $c=3$ i.e. $(a_0,a_1)=(1,2)$ and $a_1^{-3,c-1}=0$, then by Proposition \ref{riv2}$(c)$ second item we get $a_1^{-3,c-2}=0$. Note that by the first item of Remark \ref{rem}, $(ii)$, we have $h^0(\cE\tw{-3}{t})=0$ for $t \leq 4$. Then from the S.E.S \eqref{eq:34} we get $b_1^{-3,c}=0$.\

Therefore, in these two situations, we get that $\cE\tw{-2}{0}$ is the homology of the monad :
\begin{equation*}
    0 \to \mathcal A \to \mathcal B \to \mathcal C \to 0,
\end{equation*}
where, $\mathcal A:=  \cO_X\tw{-1}{-1}^{\oplus a_2^{-3, c-2}}$,
$\mathcal B :=\cO_X\tw{-1}{0}^{\oplus b_2^{-3, c}}  \bigoplus \cO_X\tw{0}{-2}^{\oplus a_2^{-2, -1}}$ 
and $\mathcal C:= \cO_X\tw{-1}{1}^{\oplus a_2^{-3, c-1}} \bigoplus \cO_X\tw{0}{-1}^{\oplus b_2^{-2, 1}}$.\\


We summarize the above discussion in the following Remark.\


\begin{remark}\label{monad}(monadic description of certain Ulrich bundles on $X$)\\
With the notation as above, let $a_0=a_1$ or $c=3$. If $\cE$ is an Ulrich bundle on $X$ with $h^{1}(\cE\tw{-3}{c-1})=0$. Then $\cE$ is the homology of a monad:


\begin{align*}
    0 \to \cO_X\tw{1}{-1}^{\oplus a_2^{-3, c-2}}  &\to \cO_X\tw{1}{0}^{\oplus b_2^{-3, c}} \bigoplus \cO_X\tw{2}{-2}^{\oplus a_2^{-2, -1}} \to \\ & \to \cO_X\tw{1}{1}^{\oplus a_2^{-3, c-1}} \bigoplus \cO_X\tw{2}{-1}^{\oplus b_2^{-2, 1}} 
    \to 0.
\end{align*}

\end{remark}

\section{Examples and Ulrich Wildness}\label{S4}

In this section, we give examples of Ulrich bundles on $X$ and establish the Ulrich wildness of $X$. Towards this end, we use the study of Ulrich bundles on Veronese surfaces (as in \cite{Genc} and \cite{Costa}) to obtain a complete characterization of the vector bundles $G$ on $\mathbb P^2$ such that $G_\pi\tw{a}{b}:=\pi^*(G)\tw{a}{b}$ is Ulrich on $X$ for some $(a,b) \in \mathbb Z^2$. We will see that as a consequence of this result, we get a classification of Ulrich line bundles on $X$, Ulrichicity of twisted $\Omega_\pi$ on $X$ and the Ulrich wildness of $X$.\\

For any positive integer $d$, let us  denote $(\mathbb P^2, dH)$ to be the Veronese  surface of degree $d^2$. Then we prove the following:\

\begin{proposition}\label{pullback} (Classification of twisted Ulrich Pullbacks)\

 $G_\pi\tw{a}{b}$ is Ulrich on $X$ for some $G$ on $\mathbb P^2$ and $(a,b) \in \mathbb Z^2$ if and only if one of the following conditions is satisfied:

$(i)$  $a=0$, $a_0=a_1$, $b=c-a_0$ and $G$ is Ulrich on $(\mathbb P^2, a_0H)$.\



$(ii)$  $a=1$, $b=-c$ and  $G$ is Ulrich on $(\mathbb P^2, cH)$.\

$(iii)$ $a=2$, $a_0=a_1$, $b=-2a_0$ and $G$ is Ulrich on $(\mathbb P^2, a_0H)$.\



\end{proposition}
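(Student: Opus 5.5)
We use the characterization of Proposition \ref{Ulrich}$(ii)$ with $\cO_X\tw{1}{0}$: $G_\pi\tw{a}{b}$ is Ulrich if and only if $H^i(G_\pi\tw{a-t}{b})=0$ for all $i$ and $t=1,2,3$. The plan is to convert each of these three conditions into cohomology on $\PP^2$ via Lemma \ref{lem:vanishingbundle}, and then match the result with the Ulrich condition on a Veronese surface $(\PP^2,dH)$. On a surface that condition reads $H^\bullet(G(-d))=H^\bullet(G(-2d))=0$.

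First we pin down $a$. Among the three twists $a-1,a-2,a-3$, any value equal to $-1$ gives automatic vanishing by Lemma \ref{lem:vanishingbundle}$(ii)$. A value $\ge 0$ gives $H^\bullet(\PP^2,\Sym^{a-t}\cV\otimes G(b))$, and a value $\le -2$ gives, by $(iii)$, $H^{3-\bullet}(\PP^2,\Sym^{t-a-2}\cV\otimes G^*(c-b-3))$. The twists equal to $0$ or $-2$ are the decisive ones: they produce $H^\bullet(G(b))=0$, respectively $H^\bullet(G^*(c-b-3))=0$, equivalently (by Serre duality on $\PP^2$) $H^\bullet(G(b-c))=0$.

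Next we rule out every $a$ outside $\{0,1,2\}$. If $a\ge 3$, there are three consecutive nonnegative twists. One of them is $0$ or $1$, or else all are $\ge 2$. We then need $H^\bullet(\Sym^{m}\cV\otimes G(b))=0$ for $m=a-3,a-2,a-1$. Since $\Sym^m\cV$ splits as $\bigoplus_{j=0}^m\cO(ja_0+(m-j)a_1)$, this forces $H^\bullet(G(s))=0$ for a set of twists $s$ containing too many values. The Euler characteristic $\chi(G(s))$ is a quadratic polynomial in $s$ with leading coefficient $\mathrm{rk}(G)/2$, so it has at most two roots; three or more distinct vanishing twists is a contradiction. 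The case $a\le -1$ is handled the same way through Lemma \ref{lem:vanishingbundle}$(iii)$ and $G^*$.

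For each of $a=0,1,2$ we then list the conditions explicitly.
\begin{itemize}
\item For $a=1$ the twists are $0,-1,-2$. The conditions are $H^\bullet(G(b))=0$ and $H^\bullet(G(b-c))=0$, with $\Sym^0\cV=\cO$. With $G'=G(b+c)$ these read $H^\bullet(G'(-c))=H^\bullet(G'(-2c))=0$, so $G'$ is Ulrich on $(\PP^2,cH)$. This should be case $(ii)$ after renaming $G$; the normalization of $b$ in the statement needs checking.
\item For $a=0$ the twists are $-1,-2,-3$. The conditions are $H^\bullet(G(b-c))=0$ and $H^\bullet(\cV^*\otimes G(b-c))=0$, that is, $H^\bullet(G(b-c-a_0))=H^\bullet(G(b-c-a_1))=0$. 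Together with $H^\bullet(G(b-c))=0$ this gives vanishing at three twists unless $a_0=a_1$. So $a_0=a_1$, and the twists are $b-c$ and $b-c-a_0$, spaced by $a_0$: $G$ suitably twisted is Ulrich on $(\PP^2,a_0H)$.
\item The case $a=2$ is symmetric, giving twists $b$ and $b+a_0$, and again $a_0=a_1$.
\end{itemize}
The converse directions follow by running the same equivalences backwards.

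We expect two points to be the main obstacle. The first is the exclusion argument when $a_0\ne a_1$ or $|a|$ is large. The quadratic Euler-characteristic trick must be applied carefully, since some summands of $\Sym^m\cV$ may repeat twists; choosing three genuinely distinct twists handles this. The second is keeping the normalization of $b$ (the Veronese Ulrich twist $G\mapsto G(\pm d)$) consistent with the stated values $b=c-a_0$, $-c$, $-2a_0$. We would fix conventions by testing on line bundles, e.g.\ $G=\cO_{\PP^2}$ for $d=1$.
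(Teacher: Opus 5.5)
Your proof is correct. Its core is exactly the paper's case analysis for $a=0,1,2$: you translate the three Ulrich vanishings via Lemma~\ref{lem:vanishingbundle} and Serre duality on $\PP^2$ into $H^\bullet(G(s))=0$ for explicit twists $s$. You then match this with $H^\bullet(G'(-d))=H^\bullet(G'(-2d))=0$ on $(\PP^2,dH)$.

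You differ from the paper in two framing steps.

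\textbf{Confining $a$ to $\{0,1,2\}$.} The paper uses Ulrich duality. With $\cF=G(b)$, both $\cF_\pi\tw{a}{0}$ and $(\cF^*)_\pi\tw{2-a}{c-3}$ are Ulrich, so both have $H^0\neq 0$. Lemma~\ref{lem:vanishingbundle} kills $H^0$ whenever the first twist is negative, which gives $a\ge 0$ and $2-a\ge 0$. You instead use that $\chi(G(s))$ is quadratic in $s$ with leading coefficient $\mathrm{rk}(G)/2$, so at most two twists can be cohomologically trivial. Taking the summand $\cO(ma_1)$ of $\Sym^m\cV$ for three consecutive values of $m$ then produces three distinct such twists whenever $a\ge 3$ or $a\le -1$. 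Your side remark about twists equal to $0$ or $1$ is unnecessary. Your route is more elementary, needing only Riemann--Roch on $\PP^2$. The same count is also exactly what justifies the paper's unexplained step ``since $a_0>0$, we must have $a_0=a_1$''.

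\textbf{The converse.} The paper appeals to \cite{Genc} for duals and resolutions of Veronese Ulrich bundles, followed by a cohomology computation. You correctly observe that every translation step is already an equivalence: Lemma~\ref{lem:vanishingbundle} gives isomorphisms and Serre duality is exact. So the converse is immediate.

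\textbf{Normalization of $b$.} Your concern is resolved exactly as the paper does it. The argument only determines that $G(b-c+a_0)$, $G(b+c)$, respectively $G(b+2a_0)$ is Ulrich on the relevant Veronese surface. The stated values of $b$ amount to replacing $G$ by that twist. For a fixed $G$ the value of $b$ is not forced: for example, $\cO_X\tw{0}{1}=(\cO_{\PP^2}(1))_\pi\tw{0}{0}$ is Ulrich when $a_0=a_1=1$.
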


\begin{proof}
Let $G_\pi\tw{a}{b}$ be Ulrich on $X$. If we denote $F=G(b)$, then $F_\pi\tw{a}{0}$ is Ulrich on $X$. By \cite {UB}, Proposition $3.3.3 (ii)$, $(F^{*})_{\pi}\tw{2-a}{c-3}$ is also Ulrich. Therefore, $h^0(X, F_\pi\tw{a}{0}) \neq 0, h^0(X, (F^{*})_{\pi}\tw{2-a}{c-3}) \neq 0$ and hence by Lemma \ref{lem:vanishingbundle} it follows that $a \in \{0,1,2\}$. In what follows we analyze each of these three cases separately.\\

\underline{\textbf{Case-$(i)$: $a=0$}}


Let $G_\pi\tw{0}{b}$ be Ulrich on $X$, i.e.  $H^i(X, G_\pi\tw{-1}{b})=0, H^i(X, G_\pi\tw{-2}{b})=0$ and $H^i(X, G_\pi\tw{-3}{b})=0$ for all $i \in \{0,1,2,3\}$. This means from Lemma \ref{lem:vanishingbundle}, we have $H^i(\mathbb P^2, G(b-c))=0, H^i(\mathbb P^2, G(b-a_0-c))=0$ and $H^i(\mathbb P^2, G(b-a_1-c))=0$ for all $i \in \{0,1,2\}$. Since $a_0 >0$, we must have $a_0=a_1$. This means, $G(b+a_0-c)$ is an Ulrich bundle on $(\mathbb P^2, a_0H)$. Therefore,  $b=c-a_0$ and $G$ is an Ulrich bundle on $(\mathbb P^2, a_0H)$ and condition $(i)$ is achieved. \\


\underline{\textbf{Case-$(ii)$: $a=1$}}

Let $G_\pi\tw{1}{b}$ be Ulrich on $X$, i.e.  $H^i(X, G_\pi\tw{0}{b})=0, H^i(X, G_\pi\tw{-1}{b})=0$ and $H^i(X, G_\pi\tw{-2}{b})=0$ for all $i \in \{0,1,2,3\}$. This means from Lemma \ref{lem:vanishingbundle}, we have $H^i(\mathbb P^2, G(b))=0$ and $H^i(\mathbb P^2, G(b-c))=0$ for all $i \in \{0,1,2\}$. Therefore, $G(b+c)$ is Ulrich on $(\mathbb P^2, cH)$. Therefore, $b=-c$ and $G$ is Ulrich on $(\mathbb P^2, cH)$ and condition $(ii)$ is achieved.\\


\underline{\textbf{Case-$(iii)$: $a=2$}}

Let $G_\pi\tw{2}{b}$ be Ulrich on $X$, i.e.  $H^i(X, G_\pi\tw{1}{b})=0, H^i(X, G_\pi\tw{0}{b})=0$ and $H^i(X, G_\pi\tw{-1}{b})=0$ for all $i \in \{0,1,2,3\}$. This means from Lemma \ref{lem:vanishingbundle}, we have $H^i(\mathbb P^2, G(b))=0, H^i(\mathbb P^2, G(b+a_0))=0$ and $H^i(\mathbb P^2, G(b+a_1))=0$ for all $i \in \{0,1,2\}$. Since $a_0 >0$, we must have $a_0=a_1$. This means, $G(b+2a_0)$ is an Ulrich bundle on $(\mathbb P^2, a_0H)$. Therefore, $b=-2a_0$ and $G$ is Ulrich on $(\mathbb P^2, a_0H)$ and condition $(iii)$ is achieved.\\



For the converse assertion, in each case,  one can use the assumption of Ulrichicity of $G$ on some Veronese surface and \cite{Genc}, Proposition $2.1$, to see that $G^*(s)$ (for appropriate $s$) is also Ulrich there.  Then, one can use \cite{Genc}, Corollary $4.3$, to observe that $G$ fits into two S.E.S's (one as kernel and other one as cokernel). Then proving that the corresponding $G_\pi\tw{a}{b}$ is Ulrich is a straightforward cohomology computation in each case using these S.E.S's and Lemma \ref{lem:vanishing}.\


\end{proof}

\begin{remark}
Let us consider the following notations:  $\alpha_i^{j}:=h^{i}(\mathbb P^2, G(j))$ and  $\beta_i^{j}:=h^{i}(\mathbb P^2, G \otimes \Omega^1_{\mathbb P^2}(j))$.   If $G_{\pi}$ is Ulrich on $X$ for some vector bundle $G$ on $\mathbb P^2$, then one can use Lemma \ref{lem:vanishingbundle} and Proposition \ref{riv2}, to produce a list of vanishings for $G(t)$ and $ G \otimes \Omega^1_{\mathbb P^2}(t)$. Then we apply spectral sequence of \cite{okonek}, Theorem $3.1.4$ (Beilinson, Theorem $(II)$) to $G$ and deduce that $G$ fits into an exact sequence of the form:
\begin{equation}
    0 \to \mathcal O_{\mathbb P^2}(-2)^{\oplus \alpha_0^{-1}} \to \mathcal O_{\mathbb P^2}(-1)^{\oplus \beta_0^{1}} \to \mathcal O_{\mathbb P^2}^{\oplus \alpha_0^{0}} \to G \to 0
\end{equation}
Observe that this sequence on $\mathbb P^2$ is aligned with the the sequence \eqref{ww} of our main Theorem \ref{main} on $X$.\

\end{remark}

Next, we give several important corollaries of the above Proposition.\

\begin{corollary}\label{line}(Classification of Ulrich line bundles)\

Let $\mathcal O_X\tw{a}{b}$ is Ulrich on $X$ if and only if one of the following happens:\

$(i)$ $(a_0,a_1,a,b)=(1,1,0,1)$.\

$(ii)$ $(a_0,a_1,a,b)=(1,1,2,-2)$.\

\end{corollary}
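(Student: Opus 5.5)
Since $\cO_X\tw{a}{b}=(\cO_{\PP^2})_\pi\tw{a}{b}$ is the pullback of $G=\cO_{\PP^2}$ twisted by $(a,b)$, I plan to apply Proposition \ref{pullback} directly with $G=\cO_{\PP^2}$. That result says $\cO_X\tw{a}{b}$ is Ulrich if and only if one of its cases (i)--(iii) holds with $G=\cO_{\PP^2}$. So the problem reduces to one fact: for which $d\ge 1$ is $\cO_{\PP^2}$ an Ulrich line bundle on the Veronese surface $(\PP^2,dH)$?

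To answer this, I use the criterion of Proposition \ref{Ulrich}(ii) on $(\PP^2,dH)$: $\cO_{\PP^2}$ is Ulrich exactly when $H^i(\cO_{\PP^2}(-d))=H^i(\cO_{\PP^2}(-2d))=0$ for all $i$. The only nonzero cohomology of line bundles on $\PP^2$ in negative degrees is $H^2(\cO(m))$ for $m\le -3$. Thus the condition holds if and only if $-2d\ge -2$, i.e. $d=1$. (One should also check the initialization condition $h^0(\cO)\ne 0$, which is clear.) Therefore $\cO_{\PP^2}$ is Ulrich on $(\PP^2,dH)$ only for $d=1$.

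Now I run through the cases of Proposition \ref{pullback}.
\begin{itemize}
\item Case (ii) requires $c=1$, which is impossible since $c=a_0+a_1\ge 2$ because $0<a_0\le a_1$.
\item Case (i) requires $a_0=a_1=1$. Then $c=2$, and $a=0$, $b=c-a_0=1$, which gives $(1,1,0,1)$.
\item Case (iii) requires $a_0=a_1=1$, $a=2$, $b=-2a_0=-2$, which gives $(1,1,2,-2)$.
\end{itemize}
The converse direction is part of Proposition \ref{pullback} (the "if" direction), so both listed line bundles are indeed Ulrich. As a sanity check, with $a_0=a_1=1$ we have $X=\PP^2\times\PP^1$ embedded by $\cO(1,1)$, and the two bundles are the known Ulrich line bundles $\cO(1,0)$ and $\cO(-1,2)$ up to the identification $\Hyp=\fib+(\text{fibre class})$.

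The only potentially delicate point is making sure that $G=\cO_{\PP^2}$ is the correct choice. Every line bundle on $X$ is of the form $\cO_X\tw{a}{b}$ with $G$ trivial. The twist $b$ is absorbed into $(a,b)$ rather than into $G$, and this matches the way Proposition \ref{pullback} is stated. Beyond that, the argument is routine.
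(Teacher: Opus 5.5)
Your argument is correct and is essentially the paper's proof: you apply Proposition \ref{pullback} with $G=\cO_{\PP^2}$ and run through its three cases. The differences are minor: you verify directly from the cohomology of line bundles on $\PP^2$ that $\cO_{\PP^2}$ is Ulrich on $(\PP^2,dH)$ only for $d=1$ (the paper cites \cite{Genc}, Proposition 3.1), and you obtain the converse from the ``if'' half of Proposition \ref{pullback} instead of by a direct computation with Lemma \ref{lem:vanishing}. There is one slip, outside the logical argument, in your sanity check: with $\Hyp=\cO(1,1)$ and $\fib=\cO(1,0)$ one gets $\cO_X\tw{2}{-2}=\cO_{\PP^2\times\PP^1}(0,2)$, not $\cO(-1,2)$, and $\cO(-1,2)$ is in fact not Ulrich because $H^2(\cO(-3,0))\neq 0$.
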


\begin{proof}
Let $\mathcal O_X\tw{a}{b}$ is Ulrich on $X$. Note that this is a special case of Proposition \ref{pullback} with $G=\mathcal O_{\mathbb P^2}$.  Then we consider three cases as follows:
\begin{itemize}
    \item  Let $a=0$. By Proposition \ref{pullback} $(i)$, we have $a_0=a_1$, $b=a_0$ and $\mathcal O_{\mathbb P^2}$ is Ulrich on ($\mathbb P^2, a_0H)$. Then \cite{Genc}, Proposition $3.1$ forces $a_0=1$. Therefore, we have $(a_0,a_1,a,b)=(1,1,0,1)$ and condition $(i)$ is achieved.\

    

    \item Let $a=1$. By Proposition \ref{pullback} $(ii)$, we have $\mathcal O_{\mathbb P^2}$ is Ulrich on $(\mathbb P^2, cH)$. Since $c \geq 2$, this case can't occur by \cite{Genc}, Proposition $3.1$.\
    

     \item Let $a=2$. By Proposition \ref{pullback} $(iii)$, we have $a_0=a_1$, $b=-2a_0$ and $\mathcal O_{\mathbb P^2}$ is Ulrich on ($\mathbb P^2, a_0H)$. Then \cite{Genc}, Proposition $3.1$ forces $a_0=1$. Therefore, we have $(a_0,a_1,a,b)=(1,1,2,-2)$ and condition $(iii)$ is achieved.\

     
\end{itemize}

The converse assertion is a straightforward cohomology computation using the Lemma \ref{lem:vanishing}.\

\end{proof}

\begin{remark}
We point out that the classification of Ulrich line bundles on \(X\) also appears as a special case of the study of Ulrich line bundles on threefold scrolls of low rank carried out in \cite{U3}, see Theorem~0.1, Proposition~5.1, and Remark~5.2. Their approach relies on explicit computations on \(\mathbb{P}^2\). In contrast, our classification follows naturally from understanding Ulrich pullbacks of arbitrary rank on \(X\), together with the study of Ulrich bundles on Veronese surfaces developed in \cite{Genc}.
\end{remark}

\begin{corollary}\label{cotangent}(Classification of twisted, pulled back cotangent bundle)\

Let $\Omega_{\pi} \tw{a}{b}$ is Ulrich on $X$ if and only if one of the following occurs:\

$(i)$ $(a_0,a_1,a,b)=(2,2,0,5)$.\

$(ii)$ $(a_0,a_1,a,b)=(1,1,1,1)$.\

$(iii)$  $(a_0,a_1,a,b)=(2,2,2,-1)$.\
   
\end{corollary}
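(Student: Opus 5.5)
Since $\Omega_{\pi}\tw{a}{b}=\pi^*(\Omega^1_{\PP^2})\tw{a}{b}$, this is the case $G=\Omega^1_{\PP^2}$ of Proposition \ref{pullback}, and I would argue exactly as in Corollary \ref{line}. Proposition \ref{pullback} says $\Omega_\pi\tw{a}{b}$ is Ulrich on $X$ if and only if one of the following holds:
\begin{itemize}
\item $a=0$, $a_0=a_1$, $b=c-a_0=a_0$, and $\Omega^1_{\PP^2}$ is Ulrich on $(\PP^2,a_0H)$;
\item $a=1$, $b=-c$, and $\Omega^1_{\PP^2}$ is Ulrich on $(\PP^2,cH)$;
\item $a=2$, $a_0=a_1$, $b=-2a_0$, and $\Omega^1_{\PP^2}$ is Ulrich on $(\PP^2,a_0H)$.
\end{itemize}
Here "$G$ is Ulrich on $(\PP^2,dH)$" should be read as in the proof of the proposition: some twist $G(s)$ is Ulrich for $\cO(d)$. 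So everything reduces to one question: for which $d$ and $s$ is $\Omega^1_{\PP^2}(s)$ Ulrich with respect to $\cO_{\PP^2}(d)$?

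To answer it, I would use the criterion of Proposition \ref{Ulrich}$(ii)$ on the surface: $\Omega^1(s)$ is Ulrich for $\cO(d)$ iff $H^i(\Omega^1(s-d))=H^i(\Omega^1(s-2d))=0$ for all $i$. By Bott's formula, $\Omega^1_{\PP^2}(t)$ has all cohomology zero only for $t=1$ and $t=2$; for instance $h^1(\Omega^1)=1$. So we need $\{s-d,s-2d\}=\{1,2\}$ with $d>0$, which forces $d=1$ and $s=3$. Alternatively one can quote the classification in \cite{Genc} or \cite{Costa}: $\Omega^1_{\PP^2}(3)$ is Ulrich only on $(\PP^2,H)$. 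I would also double-check the rank-two count. Ulrich rank-$2$ bundles on $(\PP^2,dH)$ have $h^0=2d^2$, while $h^0(\Omega^1(s))$ is determined by Bott's formula. This is the main obstacle, because the listed answers suggest more than $d=1$ is allowed.

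Indeed, case (i) of the statement has $a_0=2$, which would need $\Omega^1(s)$ Ulrich on $(\PP^2,2H)$. This forces me to recheck the reduction inside Proposition \ref{pullback}. For $a=0$, the vanishing of $H^\bullet(G_\pi\tw{-k}{b})$ for $k=1,2,3$ gives, via Lemma \ref{lem:vanishingbundle}(iii), the vanishing of $H^\bullet(\Sym^{k-2}\cV^*\otimes G^*(c-b-3))$ on $\PP^2$. This is a condition at two twists spaced by $a_0$, namely $G^*(c-b-3)$ and $G^*(c-b-3-a_0)$, and these twists need not be the ones Bott forces when $d=1$.

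So, concretely, I would redo each case directly with Lemma \ref{lem:vanishingbundle} for $G=\Omega^1_{\PP^2}$. In every case the conditions reduce to the vanishing of all cohomology of $\Omega^1(t_1)$ and $\Omega^1(t_2)$ (or of their duals $T(t)=\Omega^1(t+3)$), with $t_2-t_1\in\{a_0,c\}$. Bott gives exactly $\{t_1,t_2\}=\{1,2\}$, so the spacing must be $1$ and hence $a_0=1$. In the duality case the two shifted twists should coincide with the same pair after the shift by $c-3$, and I expect this to give the $(2,2)$ solutions. The step I am least sure of is tracking these shifts correctly. The converse is then a direct cohomology check of each listed quadruple using Lemma \ref{lem:vanishingbundle} and Bott's formula.
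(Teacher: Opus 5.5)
There is a genuine gap, and it sits in the one computation everything rests on: Bott's formula does not give $\{1,2\}$. In fact $H^\bullet(\Omega^1_{\PP^2}(t))=0$ exactly for $t\in\{-1,1\}$. Indeed, $h^0(\Omega^1_{\PP^2}(2))=3$ by the twisted Euler sequence $0\to\Omega^1_{\PP^2}(2)\to\cO_{\PP^2}(1)^{\oplus 3}\to\cO_{\PP^2}(2)\to 0$. On the other hand, $\Omega^1_{\PP^2}(-1)$ is the kernel of $\cO_{\PP^2}(-2)^{\oplus 3}\to\cO_{\PP^2}(-1)$, so it has no cohomology. Hence $\Omega^1_{\PP^2}(s)$ is Ulrich for $\cO_{\PP^2}(d)$ iff $s-d=1$ and $s-2d=-1$, i.e.\ $d=2$, $s=3$. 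So $T_{\PP^2}=\Omega^1_{\PP^2}(3)$ is Ulrich on $(\PP^2,2H)$ and on no other Veronese surface. This is exactly the input (\cite{Genc}, Remark 5.3) that the paper feeds into Proposition~\ref{pullback}. Your claim that $\Omega^1_{\PP^2}(3)$ is Ulrich only on $(\PP^2,H)$ is false, since Ulrich on $(\PP^2,H)$ means trivial. The $h^0$ check you proposed would have exposed this: $h^0(T_{\PP^2})=8=2\cdot 2^2$, not $2\cdot 1^2$.

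As written, therefore, the argument does not prove the corollary. Your direct recomputation ends with ``spacing $1$, hence $a_0=1$'', which contradicts items (i) and (iii). The hope that a duality shift will produce the $(2,2)$ solutions is not an argument. Nor is anything wrong with the reduction in Proposition~\ref{pullback}. By Lemma~\ref{lem:vanishingbundle}(iii) the relevant bundles are $\Sym^{k-2}\cV\otimes G^*(c-b-3)$ (with $\cV$, not $\cV^*$, so the twists are $c-b-3+a_i$). Serre duality on $\PP^2$ turns these conditions into vanishing for $G(b-c)$ and $G(b-c-a_i)$, exactly as the paper writes.

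Once Bott is corrected, your direct route does go through. It is also more self-contained than the paper's, since it replaces the citation of \cite{Genc} by an Euler-sequence computation. For $a=0$, the twists $c-b<c-b+a_0\le c-b+a_1$ of $\Omega^1_{\PP^2}$ must be $-1,1,1$, giving $a_0=a_1=2$ and $b=5$. For $a=1$, both $b$ and $c-b$ must lie in $\{-1,1\}$, and $c\ge 2$ forces $c=2$, $b=1$. For $a=2$, the twists $b<b+a_0\le b+a_1$ must be $-1,1,1$, giving $a_0=a_1=2$ and $b=-1$. The converse is the same computation read backwards.
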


\begin{proof}

Let $\Omega_{\pi} \tw{a}{b}$ is Ulrich on $X$. Note that this is a special case of Proposition \ref{pullback} with $G=\Omega^{1}_{\mathbb P^2}(3)$.  In this case, $\Omega_{\pi} \tw{a}{b}$ is same as $G_{\pi}\tw{a}{b-3}$. Then we consider three cases as follows:

\begin{itemize}
    \item Let $a=0$. By Proposition \ref{pullback} $(i)$, we have $a_0=a_1$, $b-3=a_0$ and $\Omega^{1}_{\mathbb P^2}(3)$ is Ulrich on ($\mathbb P^2, a_0H)$. One can use \cite{Genc}, Remark $5.3$, to see that $\Omega^{1}_{\mathbb P^2}(3)$ is the unique Ulrich bundle of rank $2$ on $(\mathbb P^2, 2H)$ and  $\Omega^{1}_{\mathbb P^2}(3)$ is never Ulrich on $(\mathbb P^2, dH)$ for $d \neq 2$. This  forces $a_0=2$ and hence  $b=5$. Therefore, we have $(a_0,a_1,a,b)=(2,2,0,5)$ and condition $(i)$ is achieved.\



    \item Let $a=1$. By Proposition \ref{pullback} $(ii)$, we have, $b-3=-c$ and $\Omega^{1}_{\mathbb P^2}(3)$ is Ulrich on ($\mathbb P^2, cH)$. One can use \cite{Genc}, Remark $5.3$, to see that $\Omega^{1}_{\mathbb P^2}(3)$ is the unique Ulrich bundle of rank $2$ on $(\mathbb P^2, 2H)$ and  $\Omega^{1}_{\mathbb P^2}(3)$ is never Ulrich on $(\mathbb P^2, dH)$ for $d \neq 2$. This  forces $c=2$ and $b=1$. Therefore, we have $(a_0,a_1,a,b)=(1,1,1,1)$ and condition $(ii)$ is achieved.\



    \item Let $a=2$. By Proposition \ref{pullback} $(iii)$, we have $a_0=a_1$, $b-3=-2a_0$ and $\Omega^{1}_{\mathbb P^2}(3)$ is Ulrich on ($\mathbb P^2, a_0H)$. One can use \cite{Genc}, Remark $5.3$, to see that $\Omega^{1}_{\mathbb P^2}(3)$ is the unique Ulrich bundle of rank $2$ on $(\mathbb P^2, 2H)$ and  $\Omega^{1}_{\mathbb P^2}(3)$ is never Ulrich on $(\mathbb P^2, dH)$ for $d \neq 2$. This  forces $a_0=2$ and $b=-1$. Therefore, we have $(a_0,a_1,a,b)=(2,2,2,-1)$ and condition $(iii)$ is achieved.\
    
\end{itemize}

Conversely, We observe that:
\begin{itemize}
    \item Since $\Omega^{1}_{\mathbb P^2}(3)$ is Ulrich on $(\mathbb P^2, 2H)$. Therefore, by Proposition \ref{pullback} $(i)$, $\Omega_{\pi} \tw{0}{5}$ is Ulrich on $X$ with $a_0=a_1=2$.\


    \item  Since $\Omega^{1}_{\mathbb P^2}(3)$ is Ulrich on $(\mathbb P^2, 2H)$. Therefore, by Proposition \ref{pullback}$(ii)$, $\Omega_{\pi} \tw{1}{1}$ is Ulrich on $X$ with $a_0=a_1=1$.\
    

    \item Since $\Omega^{1}_{\mathbb P^2}(3)$ is Ulrich on $(\mathbb P^2, 2H)$. Therefore, by Proposition \ref{pullback}$(iii)$, $\Omega_{\pi} \tw{2}{-1}$ is Ulrich on $X$ with $a_0=a_1=2$.\
    
\end{itemize}

\end{proof}

\begin{corollary}\label{wild}(Ulrich wildness)\

    Let $X$ be a smooth toric threefold with Picard number $2$. Then $X$ is Ulrich wild.\
\end{corollary}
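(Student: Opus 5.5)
The plan is to transport Ulrich wildness from the Veronese surface $(\PP^2, cH)$ to $X$ through case $(ii)$ of Proposition \ref{pullback}. That case is available for every $X$, since it needs no condition on $(a_0,a_1)$: a bundle $G$ on $\PP^2$ is Ulrich on $(\PP^2,cH)$ if and only if $G_\pi\tw{1}{-c}$ is Ulrich on $X$. The functor
$$\Phi\colon G\longmapsto \pi^*(G)\tw{1}{-c}$$
from Ulrich bundles on $(\PP^2,cH)$ to Ulrich bundles on $X$ is therefore well defined.

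Since $c=a_0+a_1\ge 2$, I will invoke the known Ulrich wildness of Veronese surfaces of degree $d\ge 2$. This follows from \cite{Costa}, or from \cite{Genc} via the existence of Ulrich bundles $A,B$ on $(\PP^2,dH)$ with $\dim \mathrm{Ext}^1(B,A)\ge 3$, together with the standard criterion of Faenzi--Pons-Llopis. The result supplies, for each $r$, families of pairwise non-isomorphic indecomposable Ulrich bundles $G_\lambda$ on $(\PP^2,cH)$ of arbitrarily large dimension.

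It then remains to check two properties of $\Phi$.

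\emph{Injectivity on isomorphism classes.} Since $\pi_*\cO_X=\cO_{\PP^2}$, the projection formula gives
$$\pi_*\bigl(\Phi(G)\tw{-1}{c}\bigr)=\pi_*\pi^*G=G.$$
Hence $\Phi(G)\cong\Phi(G')$ implies $G\cong G'$.

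\emph{Indecomposability is preserved.} More strongly,
$$\mathrm{Hom}_X(\pi^*G,\pi^*G')=\mathrm{Hom}_{\PP^2}(G,\pi_*\pi^*G')=\mathrm{Hom}(G,G'),$$
so $\pi^*$ is fully faithful. Consequently $\mathrm{End}(\Phi(G))=\mathrm{End}(G)$ is local whenever $G$ is indecomposable. The same adjunction, together with $R^1\pi_*\cO_X=0$, gives $\mathrm{Ext}^1_X(\pi^*G,\pi^*G')=\mathrm{Ext}^1(G,G')$. This means one can even apply the Faenzi--Pons-Llopis criterion directly on $X$ to the pair $\Phi(A),\Phi(B)$ instead of pushing families through $\Phi$.

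The main obstacle is making sure the wildness input on $(\PP^2,cH)$ is correctly quoted for every $c\ge2$. The cleanest route is the Ext-criterion: exhibit Ulrich bundles $A,B$ on $(\PP^2,cH)$ (for instance of ranks from \cite{Genc}) with $\dim\mathrm{Ext}^1(B,A)\ge3$. One would compute this Ext via Riemann--Roch, using $\mathrm{Hom}(B,A)=0=\mathrm{Ext}^2(B,A)$, which holds for non-isomorphic stable Ulrich bundles of the same slope, or by Serre duality. By the Ext comparison above, the same dimension bound then holds for $\Phi(A),\Phi(B)$ on $X$, and the Ulrich wildness of $X$ follows.
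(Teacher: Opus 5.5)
Your argument matches the paper's for $c\ge 3$: case $(ii)$ of Proposition~\ref{pullback} combined with the wildness of $(\PP^2,cH)$. Your check that $\pi^*$ is fully faithful and preserves $\mathrm{Ext}^1$ (via $\pi_*\cO_X=\cO_{\PP^2}$ and $R^1\pi_*\cO_X=0$) supplies details the paper leaves implicit.

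\textbf{The gap.} The input you yourself flagged as the main obstacle is false: $(\PP^2,2H)$ is \emph{not} Ulrich wild. It is of finite Ulrich representation type, and its only indecomposable Ulrich bundle is $\Omega^1_{\PP^2}(3)$ (\cite{Genc}, Theorem 5.4). That bundle is exceptional, so $\mathrm{Ext}^1(B,A)=0$ for all Ulrich $A,B$ on $(\PP^2,2H)$, and your Ext-criterion can never be satisfied there. Your proof therefore breaks down exactly when $c=2$, i.e.\ $a_0=a_1=1$ and $X\cong\PP^2\times\PP^1$.

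No pullback along a single twist can repair this case. Cases $(i)$ and $(iii)$ of Proposition~\ref{pullback} then lead to $(\PP^2,H)$, whose only Ulrich bundles are trivial. Hence every Ulrich bundle of the form $G_\pi\tw{a}{b}$ is a direct sum of copies of one of $\cO_X\tw{0}{1}$, $\cO_X\tw{2}{-2}$, $\Omega_\pi\tw{1}{1}$.

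\textbf{How to close it.} The paper handles $(a_0,a_1)=(1,1)$ separately, by citing \cite{FMS} and \cite{AHMP}. Within your own framework, apply the Faenzi--Pons-Llopis criterion on $X$ itself to two line bundles coming from \emph{different} twists: $A=\cO_X\tw{0}{1}$ and $B=\cO_X\tw{2}{-2}$, both Ulrich by Corollary~\ref{line}. Lemma~\ref{lem:vanishing}, with $\cV=\cO_{\PP^2}(1)^{\oplus 2}$, gives the following.
\begin{itemize}
\item $\mathrm{Hom}(A,B)=H^0(\cO_X\tw{2}{-3})\cong H^0(\PP^2,\cO_{\PP^2}(-1)^{\oplus 3})=0$.
\item $\mathrm{Hom}(B,A)=H^0(\cO_X\tw{-2}{3})=0$.
\item $\mathrm{Ext}^1(B,A)=H^1(\cO_X\tw{-2}{3})\cong H^2(\PP^2,\cO_{\PP^2}(-4))$, which is $3$-dimensional.
\end{itemize}
The resulting extensions are not pullbacks, which is exactly why your functor $\Phi$ cannot produce them.
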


\begin{proof}
From Proposition \ref{pullback}, we have the following:
\begin{itemize}
    \item $G_\pi\tw{0}{a_0}$ is Ulrich on $X$ if and only if $G$ is Ulrich on $(\mathbb P^2, a_0H)$. (In this case $r.(a_0-1) \equiv 0 \pmod{2} $).\

    \item $G_\pi\tw{1}{-c}$ is Ulrich on $X$ if and only if $G$ is Ulrich on $(\mathbb P^2, cH)$. (In this case $r.(c-1) \equiv 0 \pmod{2} $).\

    \item $G_\pi\tw{2}{-2a_0}$ is Ulrich on $X$ if and only if $G$ is Ulrich on $(\mathbb P^2, a_0H)$. (In this case $r.(a_0-1) \equiv 0 \pmod{2} $).\
\end{itemize}
Also note that, $(\mathbb P^2, H)$ and $(\mathbb P^2, 2H)$ are of finite Ulrich representation type (see \cite{Genc}, Proposition $3.1$ and Theorem $5.4$) and for  $(\mathbb P^2, dH)$ for $d>2$ is of wild representation type (see \cite{Genc}, Theorem $6.1$, Theorem $6.2$ and \cite{Costa}, Theorem $3$). Therefore, it follows that  $X$ is Ulrich wild except $(a_0, a_1)=(1,1)$. If $(a_0,a_1)=(1,1)$, then $X$ is Ulrich wild by \cite{FMS} and \cite{AHMP}.\\
\end{proof}

\begin{remark}\label{nonpullback}Let $e=a_1-a_0$. In order to construct rank-two Ulrich bundles that are not pullbacks, we start from the cases with small $e$ and minimal polarization, i.e. $a_0=1$. In \cite{AHMP},  all the rank two Ulrich bundles have been classified for $e=0$ and in \cite{CMP}, for $e=1$. In \cite{AM2}, the notion of $H$-instanton bundles was given on every three-dimensional polarized projective manifold, and in \cite{GM}, $H$-instantons were constructed on scrolls for every $e\geq 0$ (for the case $e=1$ see also \cite{CCGM} and for $e=2$ \cite{GJ}). Those of minimal charge are Ulrich, while those of greater charge turn out to be Ulrich for suitable polarizations (i.e., suitable $a_0$). In this way, one can obtain Ulrich bundles of rank two that are not pullbacks and, consequently, of every even rank in all the scrolls we are considering.\\

\end{remark}

We end this paper with the following open questions:\\

{\bf Question 1:} How many and what cohomological conditions are necessary to characterize rank two Ulrich bundles (not pullbacks) considered in the above Remark?

{\bf Question 2:} Is it possible to construct Ulrich bundles (not pullbacks) of odd rank?











    

\section{Acknowledgements}
The second named author is a member of the GNSAGA group of INdAM. The first named author was supported by INdAM for his visit to Italy. The first named author thanks the  Dipartimento di Scienze Matematiche, Politecnico di Torino for their warm hospitality during his visit in February, 2025. The first named author is currently supported by the NBHM (DAE) Postdoctoral Fellowship.\

\bibliographystyle{amsplain}

\bigskip
\noindent
Debojyoti Bhattacharya,\\
Chennai Mathematical Institute,\\
H1 SIPCOT IT Park, Siruseri, Kelambakkam 603103,
India\\
e-mail: {\tt debojyoti7054@gmail.com; debojyoti@cmi.ac.in}

\bigskip
\noindent
Francesco Malaspina,\\
Dipartimento di Scienze Matematiche, Politecnico di Torino,\\
Corso Duca degli Abruzzi 24, 10129 Torino, Italy,\\
e-mail: {\tt francesco.malaspina@polito.it}
\end{document}